\numberwithin{equation}{section}
\newcommand{\qand}{\quad\text{and}\quad}
\theoremstyle{plain}
\newtheorem{maintheorem}{Theorem}
\newtheorem{theorem}{Theorem}[section]
\newtheorem{proposition}[theorem]{Proposition}
\newtheorem{corollary}[theorem]{Corollary}
\newtheorem{lemma}[theorem]{Lemma}
\theoremstyle{definition}
\newtheorem{remark}[theorem]{Remark}
\newtheorem{definition}{Definition}
\newtheorem{example}{Example}
\newtheorem{conjecture}{Conjecture}
\newcommand{\RR}{{\mathbb R}}
\newcommand{\la}{\lambda}
\renewcommand{\epsilon}{\varepsilon}
\newcommand{\diag}{\operatorname{diag}}
\newcommand{\trace}{\operatorname{tr}}
\newcommand{\sing}{\mathrm{Sing}}
\newcommand{\cC}{\EuScript{C}}
\newcommand{\J}{\EuScript{J}}
\newcommand{\wt}{\widetilde}
\begin{document}

\title[Dominated splitting for exterior powers]
{Dominated splitting for exterior powers and
  singular hyperbolicity}

%%%%%%%%%%%%%%%%%%%%%%%%%%%%%%%%%%%%%%%%%%%%%%%%%%%%%%%%%%%%%%%%%%%

\thanks{ V.A. was partially supported by CAPES, CNPq,
  PRONEX-Dyn.Syst. and FAPERJ (Brazil). L.S. was supported
  by CNPq and INCTMat-CAPES and by FAPESB-JCB0053/2013.  }

\subjclass{Primary: 37D30; Secondary: 37D25.}
\renewcommand{\subjclassname}{\textup{2000} Mathematics
  Subject Classification}
\keywords{dominated splitting,
  partial hyperbolicity, sectional hyperbolicity,
  infinitesimal Lyapunov function.}

%%%%%%%%%%%%%%%%%%%%%%%%%%%%%%%%%%%%%%%%%%%%%%%%%%%%%%%%%%%%%%%%%%%

\author{Vitor Araujo}
\address[V.A. \& L.S.]{Universidade Federal da Bahia,
Instituto de Matem\'atica\\
Av. Adhemar de Barros, S/N , Ondina,
40170-110 - Salvador-BA-Brazil}
\email{vitor.d.araujo@ufba.br \& lsalgado@ufba.br}

\author{Luciana Salgado} % \address[L.S.]{ Instituto de
%   Matem\'atica Pura e Aplicada - Estrada Dona Castorina,
%   110, Jardim Bot\^anico, 22460-320 Rio de Janeiro, Brazil }
% \email{lsalgado@impa.br}

%%%%%%%%%%%%%%%%%%%%%%%%%%%%%%%%%%%%%%%%%%%%%%%%%%%%%%%%%%%%%%%%%%%
\begin{abstract}
  We relate dominated splitting for a linear
  multiplicative cocyle with dominated splitting for
  the exterior powers of this cocycle. % This allows us to
  % show that singular-hyperbolicity is equivalent to
  % separability with respect to families of
  % infinitesimal Lyapunov functions both for the given
  % cocycle and the exterior square of this
  % cocycle.
  For a $C^1$ vector field $X$ on a $3$-manifold, we
  can obtain singular-hyperbolicity using only the tangent map $DX$
  of $X$ and a family of indefinite and non-degenerate quadratic forms
  without using the associated flow $X_t$ and its derivative $DX_t$.
  In this setting, we also improve a result from \cite{arsal2012a}.
  % We use the notion of infinitesimal Lyapunov function
  % applied to three-dimensional singular vector fields,
  % obtaining a characterization of singular-hyperbolicity
  % using the vector field $X$, its space derivative $DX$ and
  % a family of non-degenerate indefinite quadratic forms
  % only. This improves other results from the authors where
  % the same conclusion was obtained through the Linear
  % Poincar\'e flow, which is only defined away from
  % singularities.
  As a consequence, we show the existence of adapted metrics
  for singular-hyperbolic sets for three-dimensional $C^1$
  vector fields.
\end{abstract}

%%%%%%%%%%%%%%%%%%%%%%%%%%%%%%%%%%%%%%%%%%%%%%%%%%%%%%%%%%%%%%%%%%%

\date{\today}

\maketitle
\tableofcontents

\section{Introduction}
\label{sec:intro}

Let $M$ be a connected compact finite $n$-dimensional
manifold, $n \geq 3$, with or without boundary. We consider
a vector field $X$, such that $X$ is inwardly transverse to
the boundary $\partial M$, if $\partial M\neq\emptyset$.
The flow generated by $X$ is denoted by $\{X_t\}$.

A hyperbolic set for a flow $X_t$ on
%a finite dimensional Riemannian manifold
$M$ is a compact invariant set $\Gamma$
with a continuous splitting of the tangent bundle,
$T_\Gamma M= E^s\oplus E^X \oplus E^u$, where $E^X$ is the
direction of the vector field, for which the subbundles are
invariant under the derivative $DX_t$ of the flow $X_t$
\begin{align}\label{eq:hyp-splitting}
  DX_t\cdot E^*_x=E^*_{X_t(x)},\quad  x\in \Gamma, \quad t\in\RR,\quad *=s,X,u;
\end{align}
and $E^s$ is uniformly contracted by $DX_t$ and $E^u$ is
likewise expanded: there are $K,\lambda>0$ so that
\begin{align}\label{eq:Klambda-hyp}
  \|DX_t\mid_{E^s_x}\|\le K e^{-\lambda t},
  \quad
  \|(DX_t \mid_{E^u_x})^{-1}\|\le K e^{-\lambda t},
  \quad x\in \Gamma, \quad t\in\RR.
\end{align}
Very strong properties can be deduced from the existence of
such hyperbolic structure; see for
instance~\cite{Bo75,BR75,Sh87,KH95,robinson2004}.

Weaker notions of hyperbolicity (e.g. dominated
splitting, partial hyperbolicity, volume hyperbolicity,
sectional hyperbolicity, singular hyperbolicity etc)
have been developed to encompass larger classes of
systems beyond the uniformly hyperbolic ones;
see~\cite{BDV2004} and
specifically~\cite{viana2000i,AraPac2010,ArbSal2011} for singular
hyperbolicity and Lorenz-like attractors.

Proving the existence of some hyperbolic structure as
in~(\ref{eq:hyp-splitting}) and (\ref{eq:Klambda-hyp}),
is in general a non-trivial matter, even in its weaker
forms. We recall that the Lorenz attractor was shown to
exist through a computer assisted proof only very
recently in~\cite{Tu99} and, even more recently, in
\cite{HuntMack03} it was constructed a concrete example
of a mechanical system modeled by an Anosov flow.

The ``cone field technique'' is the usual way to prove
hyperbolicity even in some of its weaker forms; see e.g.
\cite{Aleks68,Aleks68a,Aleks69,robinson1999,Newhouse2004}.
Given a field of non-degenerate and indefinite quadratic
forms $\J:TM \to \RR$ with constant index, we define the
negative cone $C_-(x)$ as the set of vectors $v\in T_x M$
such that $\J(v) < 0$ and, analogously, we define the
positive cone $C_+(x)$ as the set of vectors $v\in T_x M$
such that $\J(v) > 0$.  Lewowicz used this notion in his
study of expansive homeomorphisms~\cite{lewow89} and
obtained an equivalence involving quadratic forms and
uniform hyperbolicity for diffeomorphisms. Indeed,
in~\cite{lewow89} it is proved that a diffeomorphism $f$ is
Anosov (that is, the whole manifold is a hyperbolic set) if,
and only, if there exists a field of non-degenerate and
indefinite quadratic forms $\J$ on the whole manifold $M$
such that the quadratic forms $ f^{\sharp}\J - \J $ are
everywhere positive definite, where $f^{\sharp}\J$ denotes
the pullback of the quadratic form by the derivative of $f$.

This idea was adapted for the study of Lyapunov exponents in
\cite{Wojtk85}, where a counterpart of the Lewowicz result
was obtained using the notion of $\J$-monotonicity, and was
also used to study stochastic properties of diffeomorphisms
in \cite{BurnKatok94}.

In \cite{arsal2012a} the authors extended these results
obtaining a necessary and sufficient condition for a maximal
invariant set $\Gamma$, possibly with singularities, of a
trapping region $U$, to be a partially hyperbolic set for a
$C^1$ flow $X_t$.

We briefly present these results in what follows with the
relevant definitions before stating the new results in this
paper.

Here we provide an alternative way to obtain singular
hyperbolicity for three-dimensional flows using the
same expression as in Proposition~\ref{pr:J-separated}
applied to the infinitesimal generator of the exterior
square $\wedge^2 DX_t$ of the cocycle $DX_t$.  This
infinitesimal generator can be explicitly calculated
through the infinitesimal generator $DX$ of the linear
multiplicative cocycle $DX_t$ associated to the vector
field $X$.

In a number of situations dealing with mathematical models
from the physical, engineering or social sciences, it is the
vector field that is given and not the flow. Thus we expect
that the results here presented to be useful to check some
weaker forms of hyperbolicity.

Indeed, we are able to explicitly prove that the
geometrical Lorenz attractor is singular-hyperbolic in
a straighforward way using this technique; see
Section~\ref{sec:comments-organiz-tex}.

As a consequence of these ideas we show the existence
of adapted metrics for singular-hyperbolic subsets for
general $C^1$ three-dimensional vector fields.

\subsection{Statement of preliminary results}
\label{sec:statem-prelim-result}

We recall that a \emph{trapping region} $U$ for a flow $X_t$ is an open
subset of the manifold $M$ which satisfies: $X_t(U)$ is
contained in $U$ for all $t>0$, and there exists $T>0$ such
that $\overline{X_t(U)} $ is contained in the interior of
$U$ for all $t>T$. We define $\Gamma(U)=\Gamma_X(U):=
\cap_{t>0}\overline {X_t(U)}$ to be the \emph{maximal
  positive invariant subset in the trapping region $U$}.

A \emph{singularity} for the vector field $X$
is a point $\sigma\in M$ such that $X(\sigma)=\vec0$ or,
equivalently, $X_t(\sigma)=\sigma$ for all $t \in \RR$. The
set formed by singularities is the \emph{singular set of
  $X$} denoted $\sing(X)$.  We say that a singularity is
  hyperbolic if the eigenvalues of the derivative
$DX(\sigma)$ of the vector field at the singularity $\sigma$
have nonzero real part

\begin{definition}\label{eq:domination}%\label{def1}
  %\cite[Definition 2.6]{MeMor06}
  A \emph{dominated splitting} over a compact invariant set $\Lambda$ of $X$
  is a continuous $DX_t$-invariant splitting $T_{\Lambda}M =
  E \oplus F$ with $E_x \neq \{0\}$, $F_x \neq \{0\}$ for
  every $x \in \Lambda$ and such that there are positive
  constants $K, \lambda$ satisfying
  \begin{align}\label{eq:def-dom-split}
    \|DX_t|_{E_x}\|\cdot\|DX_{-t}|_{F_{X_t(x)}}\|<Ke^{-\la
      t}, \ \textrm{for all} \ x \in \Lambda, \ \textrm{and
      all} \,\,t> 0.
  \end{align}
\end{definition}

A compact invariant set $\Lambda$ is said to be
\emph{partially hyperbolic} if it exhibits a dominated
splitting $T_{\Lambda}M = E \oplus F$ such that subbundle
$E$ is \emph{uniformly contracted}, i.e.,
there exists $C>0$ and $\lambda>0$ such that $\|DX_t|_{E_x}\|\leq Ce^{-\lambda}$ for $t\geq 0$.
In this case $F$ is the \emph{central subbundle} of $\Lambda$.
  Or else, we may replace uniform contraction along $E^s$ by
  uniform expansion along $F$ (the right hand side condition
  in (\ref{eq:Klambda-hyp}).
  %We present formal definitions
  %and statements of these results in Section
  %\ref{sec:prelim-definit}}

We say that a $DX_t$-invariant subbundle $F \subset
  T_{\Lambda}M$ is a \emph{sectionally expanding} subbundle
  if $\dim F_x \geq 2$ is constant for $x\in\Lambda$
  and there are positive constants $C , \lambda$ such that for every $x
    \in \Lambda$ and every two-dimensional linear subspace
    $L_x \subset F_x$ one has
    \begin{align}\label{eq:def-sec-exp}
    %\label{eq:sectional-expansion}
      \vert \det (DX_t \vert_{L_x})\vert > C e^{\la t},
      \textrm{ for all } t>0.
    \end{align}

\begin{definition}\label{sechypset} \cite[Definition
  2.7]{MeMor06} A \emph{sectional-hyperbolic set} is a
  partially hyperbolic set whose singularities are
  hyperbolic and the central subbundle is sectionally
  expanding.
\end{definition}

This is a particular case of the so called
\emph{singular hyperbolicity} whose definition we
recall now.  A $DX_t$-invariant subbundle $F \subset
T_{\Lambda}M$ is said to be a \emph{volume expanding}
if in the above condition \ref{eq:def-sec-exp}, we may
write
 \begin{align}\label{eq:def-sec-exp}
      \vert \det (DX_t \vert_{F_x})\vert > C e^{\la t},
      \textrm{ for all } t>0.
    \end{align}

\begin{definition}\label{singhypset} \cite[Definition
  1]{MPP99} A \emph{singular hyperbolic set} is a
  partially hyperbolic set whose singularities are
  hyperbolic and the central subbundle is volume
  expanding.
\end{definition}

%Sectional hyperbolicity was introduced in~\cite{MeMor06} as
%an extension of the notion of hyperbolicity encompassing
%invariant sets with singularities. It demands that the set
%be partially hyperbolic and that along the central bundle
%$E^c$ the area form restricted to any two-dimensional
%subspace be uniformly expanded.
We remark that, in the three-dimensional
case, these notions are equivalent.
%as \emph{singular hyperbolicity}
%as introduced in~\cite{MPP99}: the uniformly contracted
%direction is one-dimensional, and the area along the
%two-dimensional central direction is uniformly expanded:
%there are $K,\lambda>0$ such that
%\begin{align}\label{eq:sectional-expansion}
%  \vert \det (DX_t \mid {E^c_x})\vert > C e^{\la t}, \forall
%  t>0, x\in \Gamma.
%\end{align}
This is a feature of the Lorenz attractor as proved in
\cite{Tu99} and also a notion that extends
hyperbolicity for singular flows, because sectional
hyperbolic sets without singularities are hyperbolic;
see \cite{MPP04, AraPac2010}.

\begin{remark}
  \label{rmk:sec-exp-discrete}
  The properties of singular hyperbolicity can be expressed
  in the following equivalent forms; see \cite{AraPac2010}.
  There exists $T>0$ such that
  \begin{itemize}
  \item $\|DX_T\vert_{E^s_x}\|<\frac12$ for all $x\in\Gamma$
    (uniform contraction); and
  \item $|\det (DX_T\vert_{ E^c_x})|> 2$ for all
    $x\in\Gamma$.
  \end{itemize}
\end{remark}

%This means that we have a continuous
%splitting $T_\Gamma M =E^s\oplus E^c$ such that there are
%$K,\lambda>0$ satisfying the uniform contraction property in
%the left hand side of (\ref{eq:Klambda-hyp}) together with
%the domination condition
%  \begin{align}\label{eq:domination}
%    \|DX_t|_{E^s_x}\|\cdot\|DX_{-t}|_{E^c_{X_t(x)}}\|<Ke^{-\la
%      t}, \ \textrm{for all} \ x \in \Gamma, \ \textrm{and
%      all} \,\,t> 0
%  \end{align}
%  or else, we may replace uniform contraction along $E^s$ by
%  uniform expansion along $E^c$ (the right hand side condition
%  in (\ref{eq:Klambda-hyp}).  We present formal definitions
%  and statements of these results in Section
%  \ref{sec:prelim-definit}.

We say that a compact invariant subset $\Lambda$ is
\emph{non-trivial} if
\begin{itemize}
\item either $\Lambda$ does not contain singularities;
\item or $\Lambda$ contains at most finitely
many singularities, $\Lambda$ contains some
regular orbit and is connected.
\end{itemize}

\begin{theorem}\cite[Theorem A]{arsal2012a}
  \label{mthm:Jseparated-parthyp}
  A non-trivial compact invariant subset $\Gamma$ is a
  partially hyperbolic set for a flow $X_t$ if, and only if,
  there is a $C^1$ field $\J$ of non-degenerate and
  indefinite quadratic forms with constant index, equal to
  the dimension of the stable subspace of $\Gamma$, such
  that $X_t$ is a non-negative strictly $\J$-separated flow
  on a neighborhood $U$ of $\Gamma$.

  Moreover $E$ is a negative subspace, $F$ a positive
  subspace and the splitting can be made almost orthogonal.%, that is,
 % given $\epsilon>0$ we can find a field $\J$ of quadratic
 % forms as stated given by a family of self-adjoint
 % operators $J_x:T_xM\circlearrowleft$ such that
 % $|<J_x(v_-),v_+>|<\epsilon$ for all $v_-\in E^s_x, v_+\in
 % E^c_x, x\in \Gamma$ with $\J(v_\pm)=\pm1$ .
\end{theorem}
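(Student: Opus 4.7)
The plan is to prove both implications via the cone-field technique, with the positive and negative cones of $\J$ playing the role of invariant cone fields. A preliminary observation, standard in this circle of ideas, is that non-negative strict $\J$-separation can be rephrased as a pointwise inequality on the Lie derivative of $\J$ along $X$, so that the property is open in $\J$ and checkable from $DX$ alone on a neighborhood of $\Gamma$, without reference to the time-$t$ map $DX_t$.

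For the sufficiency direction, assume the $C^1$ field $\J$ is given and $X_t$ is non-negative strictly $\J$-separated on a neighborhood $U$ of $\Gamma$. I define the closed cones $C_{\pm}(x):=\{v\in T_xM:\pm\J(v)\ge0\}$. Strict $\J$-separation gives $DX_t\cdot C_+(x)\subset\operatorname{int} C_+(X_t(x))\cup\{0\}$ for $t>0$, and dually $DX_{-t}\cdot C_-(x)\subset\operatorname{int} C_-(X_{-t}(x))\cup\{0\}$. Setting
\begin{equation*}
F_x:=\bigcap_{t\ge0}DX_t\cdot C_+(X_{-t}(x)),\qquad E_x:=\bigcap_{t\ge0}DX_{-t}\cdot C_-(X_t(x)),
\end{equation*}
the constant index of $\J$ lets a standard cone-intersection argument identify $E_x,F_x$ as continuous $DX_t$-invariant linear subspaces of complementary dimensions equal to the index and co-index of $\J$. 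Iterating strict $\J$-separation yields the exponential domination estimate~(\ref{eq:def-dom-split}). Non-negativity of the infinitesimal form of $\J$, combined with $\J<0$ on $E\setminus\{0\}$, makes $t\mapsto\J(DX_tv)$ monotone for $v\in E$; together with domination and the comparability of $|\J|$ with the ambient norm on the cone interiors, this forces uniform exponential contraction of $\|DX_t|_E\|$.

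For the necessity direction, starting from the partially hyperbolic splitting $T_\Gamma M=E\oplus F$ with $E$ uniformly contracted, I first pass to an adapted Riemannian metric (by a Mather-type averaging over a finite time window) in which $E$ and $F$ are orthogonal and the contraction and domination take place in one time unit. On $\Gamma$ I set $\J(v):=\|v^F\|^2-K\|v^E\|^2$, using the orthogonal decomposition $v=v^E+v^F$, where $K>0$ is a constant to be calibrated. Choosing $K$ large enough, the one-step contraction on $E$ dominates the possible variation on $F$, so both the infinitesimal non-negativity and strict $\J$-separation hold along $\Gamma$. I then extend $E$ and $F$ continuously off $\Gamma$ to a neighborhood $U$ (not as invariant subbundles), smooth to $C^1$, and extend $\J$ by the same formula. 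Since non-negative strict $\J$-separation is open in the $C^0$-topology of the infinitesimal form $\J'_X$, compactness of $\Gamma$ guarantees that these inequalities persist on a sufficiently small $U$. Almost orthogonality is automatic on $\Gamma$ since $E$ and $F$ are orthogonal in the adapted metric used to build $\J$.

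The main obstacle is the calibration of the constant $K$ in the definition of $\J$: the required infinitesimal non-negativity along $X$ asks that the $K$-weighted contraction along $E$ absorb the variation of $\|\cdot\|^2$ on $F$ uniformly over $\Gamma$, which is possible precisely because passing to the adapted metric renders the one-step contraction on $E$ strict and keeps the variation on $F$ controlled. A secondary issue is verifying that the invariant splitting re-extracted from the cone field on $U$ via the sufficiency direction agrees on $\Gamma$ with the splitting one started with; this holds because by construction $E$ sits in $\operatorname{int}C_-\cup\{0\}$ and $F$ sits in $\operatorname{int}C_+\cup\{0\}$, so both are captured by the respective cone intersections.
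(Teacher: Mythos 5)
The statement you are asked to prove is quoted in the present paper from \cite{arsal2012a}; its proof is not reproduced here, so I will assess the argument on its own merits.

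The necessity direction of your sketch (from partial hyperbolicity to the existence of $\J$) is essentially sound in outline: pass to an adapted metric making $E\perp F$ with one-step contraction/domination, set $\J(v)=\|v^F\|^2-K\|v^E\|^2$, check that $X(x)\in F_x$ (Lemma~\ref{le:flow-center}) gives $\J(X(x))\ge0$ automatically, and verify the infinitesimal inequality of Proposition~\ref{pr:J-separated}. The calibration of $K$ is actually unnecessary once the metric is adapted (any $K>0$, in particular $K=1$, works), but nothing there is wrong.

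The sufficiency direction, however, has a genuine gap, and it concerns precisely the extra hypothesis in the statement. You write ``Non-negativity of the infinitesimal form of $\J$, combined with $\J<0$ on $E\setminus\{0\}$, makes $t\mapsto\J(DX_tv)$ monotone for $v\in E$\dots this forces uniform exponential contraction.'' But the hypothesis is not that $\J'\ge0$; it is that the \emph{vector field} is $\J$-non-negative, i.e.\ $\J(X(x))\ge0$ on $U$. These are different conditions, and the conclusion does not follow from what you wrote. Strict $\J$-separation by itself only yields a \emph{dominated} splitting $E\oplus F$; it does not distinguish whether it is $E$ that is uniformly contracted, $F$ that is uniformly expanded, both, or neither. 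The entire content of the word ``non-negative'' in the statement is to single out the partially hyperbolic case with $E$ contracted, and your sketch does not use it correctly. The actual mechanism is roughly: from $\J(X(X_t(x)))\ge0$ for all $t\in\RR$ at regular points, plus the fact that backward iterates push the negative cone onto $E$, one deduces that $X(x)\in F_x$ wherever $X(x)\neq0$; hence the flow direction (zero Lyapunov exponent) lies in $F$, and domination forces all Lyapunov exponents along $E$ to be strictly negative for every ergodic invariant measure; a subadditive/Pliss-type lemma in the spirit of Lemma~\ref{le:sectional-arbieto} then upgrades this to uniform contraction of $\|DX_t|_E\|$. None of this appears in your argument. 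Related to this, you also do not address the ``non-trivial'' hypothesis on $\Gamma$ (connectedness, finitely many singularities, presence of a regular orbit), which is exactly what is needed to propagate the contraction estimate to the singularities where $X(x)=0$ and the above reasoning is vacuous.
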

Here strict $\J$-separation corresponds to strict cone
invariance under the action of $DX_t$ and $\langle,\rangle$
is a Riemannian inner product in the ambient manifold. We
recall that the index of a field quadratic forms $\J$ on a
set $\Gamma$ is the dimension of the $\J$-negative space at
every tangent space $T_xM$ for $x\in U$.  Moreover, we say
that two subspaces $E$ and $F$ of a vector space are
\emph{almost orthogonal} if, given $\epsilon>0$, there
exists a inner product $\langle , \rangle$ so that $|\langle
u, v\rangle| < \epsilon$, for all $u \in E, v \in F$, with
$\| u \| = 1 = \| v\|$.

We note that the condition stated in
Theorem~\ref{mthm:Jseparated-parthyp} allows us to obtain
partial hyperbolicity checking a condition at every point of
the compact invariant set that depends only on the
tangent map $DX$ to the vector field $X$ together with a
family $\J$ of quadratic forms without using the flow
  $X_t$ or its derivative $DX_t$. This is akin to checking
the stability of singularity of a vector field using a
Lyapunov function.

In addition, we presented a criterion for partial
hyperbolicity through infinitesimal Lyapunov functions
based on the space derivative $DX$ of the vector field
$X$ only.  We assume that coordinates are chosen
locally adapted to $\J$ in such a way that
$\J(v)=\langle J_x(v),v\rangle, v\in T_xM, x\in U$, and
$J_x:T_xM\circlearrowright$ is a self-adjoint linear
operator having diagonal matrix with $\pm1$ entries
along the diagonal.

We say that a $C^1$ family $\J$ of indefinite and
non-degenerate quadratic forms is \emph{compatible}
with a continuous splitting $E_\Gamma\oplus
F_\Gamma=E_\Gamma$ of a vector bundle over some compact
subset $\Gamma$ if $E_x$ is a $\J$-negative subspace
and $F_x$ is a $\J$-positive subspace for all
$x\in\Gamma$.

\begin{proposition}\cite[Proposition 1.3]{arsal2012a}
  \label{pr:J-separated}
  A $\J$-non-negative vector field $X$ on $U$ is strictly
  $\J$-separated if, and only if, there exists a compatible
  family $\J_0$ of forms and there exists a function
  $\delta:U\to\RR$ such that the operator $\tilde J_{0,x}:=
  J_0\cdot DX(x)+DX(x)^*\cdot J_0$ satisfies
  \begin{align*}
    \tilde J_{0,x}-\delta(x)J_0 \quad\text{is positive
      definite}, \quad x\in U,
  \end{align*}
  where $DX(x)^*$ is the adjoint of $DX(x)$ with respect to
  the adapted inner product.
\end{proposition}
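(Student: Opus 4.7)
The plan is to differentiate the quadratic form $\J_0$ along orbits of the flow and translate the strict cone-invariance condition into the infinitesimal operator inequality. Fix $x\in U$ and work in local coordinates adapted to $\J_0$ at $x$, so that the matrix of $J_{0,y}$ is the constant $\pm 1$ diagonal matrix $J_0$ for $y$ in a neighborhood of $x$. For $v\in T_xM$, set $v(s)=DX_s(x)\cdot v$, which solves the variational equation $\dot v(s) = DX(X_s(x))\, v(s)$. Differentiating $h(s)=\J_0(v(s))$ at $s=0$ gives
\begin{equation*}
h'(0)=\langle J_0\, DX(x)\,v,v\rangle + \langle J_0\, v, DX(x)\,v\rangle = \langle \tilde J_{0,x}\, v,v\rangle,
\end{equation*}
so $\tilde J_{0,x}$ records the infinitesimal rate of change of $\J_0$ along the flow at $x$; the same computation at $X_s(x)$ yields $h'(s)=\langle \tilde J_{0,X_s(x)}\,v(s),v(s)\rangle$ for all $s$.

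For sufficiency, assume $\tilde J_{0,x}-\delta(x) J_0$ is positive definite for every $x\in U$. Substituting into the infinitesimal formula yields $h'(s) > \delta(X_s(x))\,h(s)$ whenever $v(s)\neq 0$, which by invertibility of $DX_s$ holds for all $s$ when $v\neq 0$. A Gronwall-type argument then shows that $h(0)\geq 0$ forces $h(s)>0$ for every $s>0$: if $h(0)>0$, the multiplicative estimate $h(t)>h(0)\exp\bigl(\int_0^t\delta(X_s(x))\,ds\bigr)$ keeps $h$ positive; if $h(0)=0$, then $h'(0)>0$ (since $v\neq 0$ and the operator inequality is strict), pushing $v(s)$ into the open positive cone for small $s$, and positivity then propagates because $h$ cannot return to zero without violating the differential inequality. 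Thus $DX_s$ maps the $\J_0$-non-negative cone strictly into the open $\J_0$-positive cone, i.e., strict $\J_0$-separation holds. Since $\J_0$ is compatible with the same splitting $E\oplus F$ as $\J$ and $X$ is $\J$-non-negative by hypothesis, the two families of cones interlock along this splitting, and strict $\J_0$-separation transfers to strict $\J$-separation.

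For necessity, take $\J_0=\J$ as a first attempt. Strict $\J$-separation together with compactness of $U$ and continuity of $DX$ produces a continuous $\delta$ such that $\J(DX_t v)\geq e^{\delta(x) t}\J(v)$ for $v$ in the closed positive cone and small $t>0$; differentiating at $t=0$ gives the operator inequality $\langle(\tilde J_x-\delta(x)J_x)v,v\rangle\geq 0$ on that cone. To promote this to positive definiteness of $\tilde J_{0,x}-\delta(x)J_0$ on all of $T_xM$, construct $\J_0$ as a small perturbation of $\J$ along the negative direction of the splitting; this perturbation preserves compatibility with $E\oplus F$ and, via a linear-algebra argument exploiting the complementary contracting behavior on the negative subspace, supplies the missing positive definiteness on the complement of the positive cone.

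The principal technical obstacle is the necessity direction: extending the operator inequality from the positive cone, where it follows almost tautologically from differentiating strict cone expansion, to the entire tangent bundle while keeping the perturbed $\J_0$ compatible with the splitting and synchronizing a single continuous function $\delta$ that controls both summands. Both steps lean on the presence of the splitting $E\oplus F$ and on the uniform expansion/contraction rates afforded by strict $\J$-separation.
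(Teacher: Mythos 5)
The infinitesimal computation is right: in adapted coordinates $J_0$ is locally constant and the variational equation gives $h'(s)=\langle\tilde J_{0,X_s(x)}\,v(s),v(s)\rangle$, and the Gronwall argument correctly shows that $\tilde J_{0,x}-\delta(x)J_0>0$ for all $x$ yields strict $\J_0$-separation. That part matches the standard route (item (1) and the easy half of items (2)--(3) of Proposition~\ref{pr:J-separated-tildeJ}). There are, however, two real gaps.

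For sufficiency you conclude strict $\J_0$-separation and then assert that, because $\J_0$ is "compatible with the same splitting" and the cones "interlock," strict $\J_0$-separation transfers to strict $\J$-separation. That transfer is not automatic and is not proven. Compatibility of $\J_0$ with a splitting $E\oplus F$ only says $E$ is $\J_0$-negative and $F$ is $\J_0$-positive; two forms compatible with the same splitting can have genuinely different positive cones (e.g. $-x^2+y^2$ and $-2x^2+y^2$), and strict cone invariance of one does not imply strict cone invariance of the other unless the cones are nested in the right direction. You need a precise bridging lemma relating the cone families of $\J$ and $\J_0$; this is exactly what the notion of "compatible" in \cite{arsal2012a} is calibrated to do, and without it the sufficiency direction is incomplete.

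The necessity direction is the harder half and is not actually carried out. Differentiating the strict cone invariance at $t=0$ only gives $\langle(\tilde J_x-\delta(x)J_x)v,v\rangle\ge 0$ on the closed positive cone, and in fact only non-strictly on its boundary $\{\J=0\}$; it says nothing on $C_-(x)$. The leap to a compatible $\J_0$ with $\tilde J_{0,x}-\delta(x)J_0>0$ on all of $T_xM$ is described as a "small perturbation along the negative direction" together with "a linear-algebra argument," but no construction is given. What is actually needed — and what the argument in \cite{arsal2012a} supplies — is a way to use the uniform domination gap coming from strict separation to manufacture $\J_0$ (typically by a Lyapunov-type integral/averaging of $\J$ along the flow on the two subbundles) so that the operator inequality becomes strictly positive definite on the whole fibre. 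You flag this yourself as "the principal technical obstacle," which is an honest assessment: as written, the proof establishes only one implication, and the converse is an unfulfilled promise.
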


\begin{remark}\label{rmk:derivativeJ}
  The expression for $\tilde J_{0,x}$ in terms of $J_0$ and
  the infinitesimal generator of $DX_t$ is, in fact, the time
  derivative of $\J_0$ along the flow direction at the point
  $x$, which we denote $\partial_t J_0$; see item 1 of
  Proposition~\ref{pr:J-separated-tildeJ}. We keep this
  notation in what follows.
\end{remark}

The results leading to Theorem~\ref{mthm:Jseparated-parthyp}
and Proposition~\ref{pr:J-separated}, in the more general
case of linear multiplicative cocycles, were proved by the
authors in \cite{arsal2012a}, and then the general cocycle
can be replaced by the derivative cocycle $DX_t$ of the flow
$X_t$ with infinitesimal generator $DX$.

Building on this, in \cite[Corollary B and Proposition
1.4]{arsal2012a} it was obtained a necessary and
sufficient condition for the set $\Gamma$, possibly
with hyperbolic singularities, to be a
sectional-hyperbolic set for a $C^1$ flow $X_t$
% the existence of a $C^1$ family of non-degenerate
% quadratic forms $\J$ with constant index such that the
% flow is non-negative strictly $\J$-separated on $U$
% and, on every compact invariant subset $\Gamma_0\subset
% \Gamma \setminus \sing(X)$, the associated \emph{linear
%   Poincar\'e flow} is strictly $\J_0$-monotone for some
% field $\J_0$ of quadratic forms equivalent to $\J$ over
% $\Gamma_0$.
involving a stronger condition than the strict
$\J$-separation for the Linear Poincar\'e Flow of $X$
over all compact invariant subsets $\Gamma_0$ without
singularities of $\Gamma$.

%Sectional hyperbolicity was introduced in~\cite{MeMor06} as
%an extension of the notion of hyperbolicity encompassing
%invariant sets with singularities. It demands that the set
%be partially hyperbolic and that along the central bundle
%$E^c$ the area form restricted to any two-dimensional
%subspace be uniformly expanded.  In the three-dimensional
%case, this notion is known as \emph{singular hyperbolicity}
%as introduced in~\cite{MPP99}: the uniformly contracted
%direction is one-dimensional, and the area along the
%two-dimensional central direction is uniformly expanded:
%there are $K,\lambda>0$ such that
%\begin{align}\label{eq:sectional-expansion}
%  \vert \det (DX_t \mid {E^c_x})\vert > C e^{\la t}, \forall
%  t>0, x\in \Gamma.
%\end{align}
%This is a feature of the Lorenz attractor as proved in
%\cite{Tu99} and \emph{sectional hyperbolic sets without
%  singularities are hyperbolic}; see \cite{MPP04,
%  AraPac2010}.

A characterization of $\J$-monotonicity of the Linear
Poincar\'e Flow similar to the one in
Proposition~\ref{pr:J-separated} was also obtained in
\cite[Proposition 1.4]{arsal2012a} involving the space
derivative $DX$ of the field $X$, the field of forms $\J$
and the projection $\Pi\cdot DX$ on the normal bundle to $X$ away from
singularities. However, % checking for positive definiteness
% for some equivalent quadratic form is cumbersome and
dealing with the Linear Poincaré Flow near
singularities is prone to numerical instability and the
projection $\Pi \cdot DX$ does not extend to the
singularities.

\subsection{Statements of results}
\label{sec:statement-result}

Let $U$ be a trapping region for a $C^1$ vector field $X$ on
a compact, n-dimensional manifold $M$, which is
non-negative strictly $\J$-separated, and whose
singularities are hyperbolic in $U$. We write $\overline A$
for the topological closure of the set $A\subset M$ in what
follows. Let $\Gamma=\Gamma(U):=\overline{\cap_{t\in\RR}
  X_t(U)}$ be the maximal invariant set of $X$ in $U$.

Sectional-hyperbolicity deals with area expansion along any
two-dimensional subspace of a vector subbundle. It is then
natural to consider the linear multiplicative cocyle
$\wedge^2DX_t$ over the flow $X_t$ of $X$ on $U$, that is,
for any pair $u,v$ of vectors in $T_x M, x\in U$ and
$t\in\RR$ such that $X_t(x)\in U$ we set
\begin{align*}
  (\wedge^2 DX_t)\cdot(u\wedge v)=(DX_t\cdot
  u)\wedge(DX_t\cdot v),
\end{align*}
see \cite[Chapter 3, Section 2.3]{arnold-l-1998} or
\cite{Winitzki12} for more details and standard results
on exterior algebra and exterior products of linear
operator.

Given a partially hyperbolic splitting $T_\Gamma M=E_\Gamma\oplus
F_\Gamma$ over the compact $X_t$-invariant subset $\Gamma$, the
bundle of bivectors $\wedge^2T_\Gamma M$ admits also a partially
hyperbolic splitting, and $T_\Gamma M$ has a sectional
hyperbolic splitting if, and only if, $\wedge^2T_\Gamma M$
has a partial hyperbolic splitting of a specific kind. This
can in fact be extended to arbitrary $k$th exterior powers.

We note that if $E\oplus F$ is a $DX_t$-invariant
splitting of $T_\Gamma M$, with $\{e_1,\dots,e_\ell\}$
a family of basis for $E$ and $\{f_1,\dots,f_h\}$ a
family of basis for $F$, then $\widetilde F=\wedge^kF$
generated by $\{f_{i_1}\wedge\dots\wedge
f_{i_k}\}_{1\le i_1<\dots<i_k\le h}$ is naturally
$\wedge^kDX_t$-invariant by construction. In addition,
$\tilde E$ generated by $\{e_{i_1}\wedge\dots\wedge
e_{i_k}\}_{1\le i_1<\dots<i_k\le \ell}$ together with
all the exterior products of $i$ basis elements of $E$
with $j$ basis elements of $F$, where $i+j=k$ and
$i,j\ge1$, is also $\wedge^kDX_t$-invariant and,
moreover, $\widetilde E\oplus \widetilde F$ gives a
splitting of the $k$th exterior power $\wedge^k
T_\Gamma M$ of the subbundle $T_\Gamma M$.  One of our
results is the following.

\begin{maintheorem}\label{mthm:bivectparthyp}
  Let $T_\Gamma M=E_\Gamma\oplus F_\Gamma$ be a
  $DX_t$-invariant splitting over the compact
  $X_t$-invariant subset $\Gamma$ such that %$\dim E=s$,
  $\dim F=c\ge2$.  % For any given $2\le k\le c$,
  Let
  $\widetilde F=\wedge^c F$ be the
  $\wedge^cDX_t$-invariant subspace generated by the
  vectors of $F$ and $\tilde E$ be the
  $\wedge^cDX_t$-invariant subspace such that $\widetilde
  E\oplus\widetilde F$ is a splitting of the $c$th exterior
  power $\wedge^cT_\Gamma M$ of the subbundle $T_\Gamma M$.

  Then $E\oplus F$ is a dominated splitting if, and only if,
  $\widetilde E\oplus \widetilde F$ is a dominated splitting
  for $\wedge^cDX_t$.
\end{maintheorem}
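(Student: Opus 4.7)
The plan is to reduce the equivalence to explicit bounds for $\wedge^k DX_t$ on $\widetilde E$ and $\widetilde F$ in terms of singular values of $DX_t$ on $E$ and $F$. First I would use almost orthogonality of $E$ and $F$ (Theorem~\ref{mthm:Jseparated-parthyp}, or equivalently a routine change to an adapted metric) to reduce to the case $E \perp F$, so that the induced inner product on $\wedge^k T_\Gamma M$ makes $\widetilde E$ and $\widetilde F$ orthogonal too. Fixing $x\in\Gamma$ and $t>0$, I would take singular value decompositions of $DX_t|_{E_x}$ and $DX_t|_{F_x}$ with orthonormal bases $\{e_i\}_{i\le\ell}$, $\{f_j\}_{j\le h}$ and ordered singular values $\sigma_1\ge\dots\ge\sigma_\ell$ and $\tau_1\ge\dots\ge\tau_h$. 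The elementary wedges $e_I\wedge f_J$, with $|I|+|J|=k$, then form an orthonormal basis of $\wedge^k T_xM$ diagonalising $\wedge^k DX_t$ with singular value $\prod_{i\in I}\sigma_i \prod_{j\in J}\tau_j$, and respecting the splitting $\widetilde E\oplus\widetilde F$.

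Reading off the co-norm on $\widetilde F$ and the operator norm on $\widetilde E$, and using domination to argue that the maximum defining $\|\wedge^k DX_t|_{\widetilde E}\|$ is attained at $|I|=1$ after a uniform waiting time (since then $\sigma_1 < \tau_j$ for every $j$), the wedge-domination ratio factors as
\[
\frac{\|\wedge^k DX_t|_{\widetilde E_x}\|}{m(\wedge^k DX_t|_{\widetilde F_x})}
\;=\; \frac{\sigma_1}{\tau_h} \cdot \prod_{i=1}^{k-1}\frac{\tau_i}{\tau_{h-k+i}}.
\]
The first factor is precisely the original domination ratio $\|DX_t|_{E_x}\|/m(DX_t|_{F_x})$; each remaining factor $\tau_i/\tau_{h-k+i}$ is $\ge 1$ and equals $1$ throughout when $k=c=\dim F$.

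From here the direction ``$\widetilde E \oplus \widetilde F$ dominated $\Rightarrow$ $E \oplus F$ dominated'' is immediate: the wedge ratio bounds $\sigma_1/\tau_h$ from above, so if the former decays exponentially so does the latter. For the converse, the case $k=c$ is equally direct since the extra product telescopes to $1$ and the two ratios coincide on the nose; this is precisely the case used in the three-dimensional applications in the sequel. For $2\le k<c$ the main obstacle is controlling the internal singular-value spread $\tau_i/\tau_{h-k+i}$ of $DX_t|_F$, which is not itself a consequence of domination of $E$ over $F$. I would handle this either by iterated application of the theorem to a $DX_t$-invariant dominated flag inside $F$, or, more robustly and in the spirit of the paper, by transporting a compatible quadratic form $\J_0$ to $\wedge^k T_\Gamma M$ via Proposition~\ref{pr:J-separated} and verifying the infinitesimal-generator condition for $\wedge^k DX_t$ directly from the explicit expression of its generator in terms of $DX$, thereby bypassing singular-value bookkeeping altogether.
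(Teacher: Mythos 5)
Your singular-value approach is cleaner and more transparent than the paper's, and you have correctly located the real obstruction. The factorization
\[
\frac{\|\wedge^k DX_t|_{\widetilde E_x}\|}{m(\wedge^k DX_t|_{\widetilde F_x})}
= \frac{\sigma_1}{\tau_h}\cdot\prod_{i=1}^{k-1}\frac{\tau_i}{\tau_{h-k+i}},
\]
the observation that the extra product telescopes to $1$ when $k=c$, and the remark that the direction ``$\widetilde E\oplus\widetilde F$ dominated $\Rightarrow E\oplus F$ dominated'' works for every $k$ (since, even without the waiting-time argument, $\|\wedge^kDX_t|_{\widetilde E}\|\ge\sigma_1\tau_1\cdots\tau_{k-1}$ and $m(\wedge^kDX_t|_{\widetilde F})=\tau_{h-k+1}\cdots\tau_h$, so the wedge ratio always majorizes $\sigma_1/\tau_h$) are all correct. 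But the obstruction you flagged is not a technical nuisance to be engineered around: the implication ``$E\oplus F$ dominated $\Rightarrow\widetilde E\oplus\widetilde F$ dominated'' is \emph{false} for $2\le k<c$, and neither of your proposed repairs can succeed. Take a hyperbolic fixed point with $DX(\sigma)=\diag\{-1,1,1,100\}$ on $\RR^4$, $E=\RR e_1$, $F=\RR e_2\oplus\RR e_3\oplus\RR e_4$: this is a dominated (indeed partially hyperbolic, even sectionally hyperbolic) splitting, yet $e_1\wedge e_4\in\widetilde E$ grows like $e^{99t}$ while $e_2\wedge e_3\in\widetilde F$ grows only like $e^{2t}$, so $\widetilde E\oplus\widetilde F$ is not dominated for $\wedge^2DX_t$. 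There is no dominated flag inside $F$ to iterate over here, and transporting a compatible form $\J_0$ to $\wedge^kT_\Gamma M$ cannot certify a statement that fails at a single fixed point. The theorem is correct precisely when $k=c$ (so $\widetilde F$ is a line bundle), which is fortunately the only case invoked in Theorems~\ref{mthm:sec-exp-3d} and~\ref{mthm:adapted3d}.

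The paper's own argument is of a completely different flavour --- it passes to Lyapunov exponents of $DX_t$ over all invariant measures, exhibits a uniform spectral gap, and invokes the subadditive ergodic Lemma~\ref{le:sectional-arbieto} to upgrade the measure-wise gap to uniform exponential domination --- but it stumbles on exactly the same stone. In the computation leading to \eqref{eq:sumliap}, the top Lyapunov exponent of $\wedge^kDX_t$ on $\widetilde E$ is taken to be a sum of $k$ consecutive exponents ending at $\lambda_s$, which silently replaces $\widetilde E$ by $\wedge^kE$ and ignores the mixed wedges $e_I\wedge f_J$ with $1\le|I|<k$; those mixed wedges are precisely what makes the example above a counterexample. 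So you and the paper both have a complete argument for the necessity direction and for $k=c$, and both have an unclosable gap for $2\le k<c$; the virtue of your SVD bookkeeping is that it makes the failure visible rather than burying it in a spectral inequality.
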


As a consequence of this last result we can obtain sectional
hyperbolicity for three-dimensional flows using only the
second exterior power of the cocycle $DX_t$.
%condition over the linear Poincar\'e flow.

\begin{corollary}\label{cor:bivectparthyp}
  Assume, in the statement of Theorem
  \ref{mthm:bivectparthyp}, that $M$ has dimension $3$, $E$
  is uniformly contracted by $DX_t$ and that $c=2$.  Then
  $E\oplus F$ is a singular-hyperbolic splitting for $DX_t$
  if, and only if, $\widetilde E\oplus\widetilde F$ is a
  partially hyperbolic splitting for $\wedge^2DX_t$ such
  that $\widetilde F$ is uniformly expanded by $\wedge^2
  DX_t$.
\end{corollary}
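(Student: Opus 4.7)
The plan is to decompose the equivalence in the corollary into two independent pieces: (a) the equivalence of dominated splittings, and (b) the equivalence of sectional expansion of $F$ with uniform expansion of $\widetilde F$. Part (a) is immediate from Theorem \ref{mthm:bivectparthyp} applied with $k=2$, while the uniform contraction of $E$ is assumed on both sides of the statement. Hence the only remaining content is the equivalence
\begin{align*}
F \text{ is sectionally expanded by } DX_t \quad\Longleftrightarrow\quad \widetilde F \text{ is uniformly expanded by } \wedge^2 DX_t.
\end{align*}

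This is a pointwise linear-algebraic assertion. Fix $x\in\Gamma$ and $t>0$, and set $A:=DX_t|_{F_x}\colon F_x\to F_{X_t(x)}$ with singular values $\sigma_1(A)\ge\cdots\ge\sigma_c(A)>0$ for the ambient Riemannian metric. Two standard facts on exterior powers (see e.g. \cite{arnold-l-1998, Winitzki12}) give what is needed. First, for a simple bivector,
\begin{align*}
\|(\wedge^2 A)(u\wedge v)\| \;=\; |\det(A|_{\mathrm{span}\{u,v\}})|\cdot\|u\wedge v\|,
\end{align*}
so simple bivectors record exactly the area distortion on $2$-planes. Second, the singular values of $\wedge^2 A$ on $\wedge^2 F_x$ are precisely the products $\sigma_i(A)\sigma_j(A)$ with $1\le i<j\le c$, so the smallest equals $\sigma_{c-1}(A)\sigma_c(A)$. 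The Courant--Fischer min--max characterization applied to $A$ also yields $\inf_{\dim L=2} |\det(A|_L)| = \sigma_{c-1}(A)\sigma_c(A)$. Combining these gives the pointwise identity
\begin{align*}
\inf_{\substack{L\subset F_x\\ \dim L=2}} |\det(DX_t|_L)| \;=\; \inf_{\xi\in\wedge^2 F_x\setminus\{0\}} \frac{\|(\wedge^2 DX_t)\xi\|}{\|\xi\|},
\end{align*}
from which the desired equivalence follows with the same constants $C,\lambda$.

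The direction $(\Leftarrow)$ is the easy one: evaluating uniform expansion on a simple bivector $u\wedge v$ and invoking the displayed formula for $\|(\wedge^2 A)(u\wedge v)\|$ immediately gives sectional expansion. The main obstacle is $(\Rightarrow)$: sectional expansion only provides control on simple bivectors in $\wedge^2 F_x$, whereas when $c\ge 4$ this space also contains non-simple bivectors, on which uniform expansion must be deduced. This gap is closed by the Weyl-type identification of the singular values of $\wedge^2 A$: the worst expansion rate of $\wedge^2 A$ is realized on a simple bivector, namely the wedge of the two smallest right-singular directions of $A$, so control on simple bivectors already controls the whole exterior power. Once this pointwise identity is in hand, the combination with Theorem~\ref{mthm:bivectparthyp} and the common hypothesis of uniform contraction of $E$ completes the proof in both directions.
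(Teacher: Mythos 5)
Your proof is correct and follows the same basic strategy as the paper's: reduce the corollary to the equivalence between sectional expansion of $F$ and uniform expansion of $\widetilde F=\wedge^2 F$, invoking Theorem~\ref{mthm:bivectparthyp} for the equivalence of the dominated splittings. Where you add real content is in the forward direction. The paper observes that sectional expansion gives $\|DX_t u\wedge DX_t v\|\ge Ce^{\lambda t}\|u\wedge v\|$ for simple bivectors and then asserts, with no further comment, that $\widetilde F$ is uniformly expanded by $\wedge^2 DX_t$; but when $\dim F\ge 4$ the subspace $\wedge^2 F_x$ contains non-simple bivectors, so the passage from control on simple bivectors to a uniform lower bound for $\wedge^2 DX_t$ over all of $\wedge^2 F_x$ genuinely requires an argument. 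You close this gap explicitly via the singular-value identity $\sigma_{\min}(\wedge^2 A)=\sigma_{c-1}(A)\sigma_c(A)$, attained on the simple bivector $v_{c-1}\wedge v_c$: the worst expansion rate of $\wedge^2 A$ on $\wedge^2 F_x$ is realized on a simple bivector, so sectional expansion already controls the whole exterior square. The paper's wording is only transparently correct when $\dim F\le 3$ (where every element of $\wedge^2 F$ is simple), while your version covers the corollary in its stated generality. The reverse direction and the appeal to Theorem~\ref{mthm:bivectparthyp} match the paper's.
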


\begin{remark}
  \label{rmk:discretecase}
  A similar statement to
  Theorem~\ref{mthm:bivectparthyp} is true for discrete
  dynamical systems, that is, replacing $DX_t$ in the
  statement of Theorem~\ref{mthm:bivectparthyp} and in
  (\ref{eq:hyp-splitting}),
  (\ref{eq:Klambda-hyp}), (\ref{eq:def-dom-split}) and
  also (\ref{eq:def-sec-exp}) by the tangent map $Df$ to
  a diffeomorphism $f$ of a compact manifold $M$.
\end{remark}

We note that it is not clear how to derive, from the
knowledge of $\J$ in a general situation, a field of
indefinite non-degenerate quadratic forms $\hat\J$
defined on $\wedge^2T_\Gamma M$ such that $\wedge^2 DX_t$ is
strictly $\hat\J$-separated; see
Example~\ref{ex:no-form} in the comments
Section~\ref{sec:comments-organiz-tex} below.

However, in a $3$-manifold, we show that
singular-hyperbolicity corresponds to strict $\J$-separation
for $DX_t$ together with strict $(-\J)$-separation for
$\wedge^2DX_t$ plus a condition on the rate function
$\delta$, so the same field of quadratic forms can be used
to obtain both partial hyperbolicity and
singular-hyperbolicity.

In a three-dimensional manifold, let $(u,v,w)$ be an
orthonormal base with positive orientation on $T_x M$ for a
given $x\in U$. Since we can identify $\wedge$ with the
cross-product $\times$, then for all $t\in\RR$ we can make the
identification
\begin{align}\label{eq:def-phi-t}
  \wedge^2DX_t \cdot w = (DX_tu)\times(DX_tv).
\end{align}
Now the meaning of Theorem~\ref{mthm:bivectparthyp} and
Corollary~\ref{cor:bivectparthyp} is clear: for an
orthogonal vector $w$ to the two-dimensional central
direction $F$, the variation of the size of $\wedge^2 DX_t
\cdot w$ corresponds to the variation of the area of the
parallelogram with sides $DX_t(x)u,DX_t(x)v$.  Hence, we
have uniform expansion of area along $F$ if, and only if,
$\wedge^2 DX_t$ uniformly expands the size of $w$.

The area under the function $\delta$ provided by
Proposition~\ref{pr:J-separated} allows us to detect
different dominated splittings with respect to linear
multiplicative cocycles on vector bundles (the natural
generalizations of the object $DX_t$ over $T_\Gamma M$ and
$\wedge^2DX_t$ over $\wedge^2 T_\Gamma M$); see
Proposition~\ref{pr:char-dom-split} in
Section~\ref{sec:prelim-definit}.

If $\wedge^2DX_t$ is strictly separated with respect to some
family $\J$ of quadratic forms, then there exists the
function $\delta_2$ as stated in
Proposition~\ref{pr:J-separated} with respect to the cocyle
$\wedge^2DX_t$.  We set
$$\widetilde\Delta_a^b(x) := \int_a^b\delta_2(X_s(x))\,ds$$
the area under the function $\delta_2:U\to\RR$ given by
Proposition~\ref{pr:J-separated} with respect to
$\wedge^2DX_t$ and its infinitesimal generator.

It is not difficult to see that this function is related to
$X$ and $\delta$ as follows: let $\delta:\Gamma\to\RR$ be
the function associated to $\J$ and $DX_t$, as given by
Proposition \ref{pr:J-separated}, then
$\delta_2=2\trace(DX)-\delta$, where $\trace(DX)$ represents
the trace of the linear operator $DX_x:T_xM\circlearrowleft,
x\in M$.

We recall that $\tilde \J = \partial_t \J$ is the time
derivative of $\J$ along the flow; see
Remark~\ref{rmk:derivativeJ}.

\begin{maintheorem}
  \label{mthm:sec-exp-3d}
  Suppose that $X$ is three-dimensional vector field on M
  which is non-negative strictly
  $\J$-separated over a non-trivial subset $\Gamma$, where
  $\J$ has index $1$.
Then
\begin{enumerate}
\item $\wedge^2DX_t$ is strictly $(-\J)$-separated;
\item $\Gamma$ is a singular hyperbolic set if either one of the
following properties is true
\begin{enumerate}
\item $\widetilde\Delta_0^t(x)\xrightarrow[t\to+\infty]{}-\infty$
    for all $x\in\Gamma$.%$\widetilde\Delta_0^t$ satisfies condition (1) of  Theorem~\ref{pr:char-dom-split}.
\item $\tilde \J-2\trace(DX)\J>0$ on $\Gamma$.
\end{enumerate}
%where $\delta_2=2\trace(DX)-\delta$.
\end{enumerate}
\end{maintheorem}

This result provides useful sufficient conditions for
a three-dimensional vector field to be singular hyperbolic,
using only one family of quadratic forms $\J$ and its space
derivative $DX$, avoiding the need to check cone invariance
and contraction/expansion conditions for the flow $X_t$
generated by $X$ on a neighborhood of $\Gamma$; see the
examples in Section~\ref{sec:comments-organiz-tex} below.

To geometrically understand Theorem~\ref{mthm:sec-exp-3d},
let us consider a singular hyperbolic compact set $\Gamma$
with partial hyperbolic splitting $T_\Gamma M=
E_\Gamma^s\oplus E_\Gamma^c$. Following \cite{Goum07}, we
can obtain a smooth Riemannian adapted metric to the partial
hyperbolic splitting so that the decomposition becomes
almost orthogonal.  In this setting, it is clear that at
each point $x\in \Gamma$ and with respect to this metric we
have
\begin{align}\label{eq:close}
  \hat E_\Gamma^u:=(E^c_A)^\perp \approx E^s_\Gamma
  \qand
  \hat E_\Gamma^c=(E^s_\Gamma)^\perp \approx E^c_\Gamma
\end{align}
where $\approx$ means that the subbundles are inside a cone
of small width centered at one of them.

Hence, by definition of $\wedge^2 DX_t$, the decomposition
$\hat E^c_\Gamma\oplus \hat E^u_\Gamma$ is also
$\wedge^2DX_t$-invariant. In addition, $\wedge^2DX_t$
expands the length along the $\hat E^u_\Gamma$ direction, due to
area expansion along the $E^c_\Gamma$ direction under the action of
$DX_t$. Moreover, $\hat E^c_\Gamma$ is dominated by $\hat E^u_\Gamma$
since the area along $\hat E^c_\Gamma$ should be contracted under
the action of $\wedge^2DX_t$. This provides a partial
hyperbolic splitting for $\wedge^2DX_t$.

Therefore, by Theorem~\ref{mthm:Jseparated-parthyp}, there
exists some family of quadratic forms such that $\wedge^2
DX_t$ is strictly separated. But to arrive at the right
expansion and domination relations, we should have that
$\hat E^u$ is now inside the positive cone, and $\hat E^c_\Gamma$
inside the negative cone, so that $\hat E^u_\Gamma$ dominates $\hat
E^c_\Gamma$. By~(\ref{eq:close}) this can precisely be achieved by
taking $(-\J)$ as our family of quadratic forms.

The definition of hyperbolicity is clearly independent
of the Riemannian metric on the manifold $M$.  By a
recent result from \cite{Goum07}, there exists an
adapted metric on $M$ for $X_t$, which means that the
constants in the above
expressions~(\ref{eq:Klambda-hyp}) and
(\ref{eq:def-dom-split}) become $1$.

%\begin{theorem}
%  \label{thm:phyp-adapted-metric}
%  Let $\Gamma$ be a compact invariant set for a $C^1$
%  vector field $X$ and let $E$ be a vector bundle over
%  a neighborhood $U$ of $\Gamma$.  Suppose that
%  $T_{\Gamma}M = E\oplus F$ is a dominated splitting
%  for a linear multiplicative cocycle $A_t(x)$ over
%  $E$.  There exists a Riemannian metric
%  $<<\cdot,\cdot>>$ inducing a norm $|\cdot|$ on $E$
%  such that there exists $\lambda>0$ satisfying
%  \begin{align}\label{eq:adapted-domination}
%    |A_t\mid E_x|\cdot\big|(A_t\mid F_x)^{-1}|
%    \le e^{-\lambda t} \quad
%     x\in\Gamma, t>0.
%  \end{align}
%  If the splitting is partially hyperbolic with $E$
%  uniformly contracted, then there are
%  $<<\cdot,\cdot>>$ and $\lambda>0$ such that
%  \begin{align}\label{eq:adapted-contraction}
%     |A_t\mid E^s_x|\le e^{-\lambda t}, \quad
%     x\in\Gamma, t>0.
%  \end{align}
%  (Analogously for the partially hyperbolic case with
%  expanding $F$ by reversing the flow direction.)
%  Moreover, it is possible to extend such metric to a
%  neighborhood $V\subset U$ of $\Gamma$ such that, in
%  the adapted metric, the bundles $E,F$ over $\Gamma$
%  are almost orthogonal: given $\epsilon>0$ it is
%  possible to construct such metrics satisfying
%  $|<<u,v>>|<\epsilon$ for all $u\in E_x, v\in F_x$
%  with $|u|=|v|=1$.
%\end{theorem}
\begin{definition}
\label{adapmetric3d}
We say a Riemannian metric $\langle\cdot,\cdot\rangle$  \emph{adapted to a singular hyperbolic splitting} $T\Gamma = E \oplus F$
if it induces a norm $|\cdot|$ such that there exists $\lambda>0$ satisfying for all
  $x\in \Gamma$ and $t>0$ simultaneously
    \begin{align*}
    |DX_t\mid_{E_x}|\cdot\big|(DX_t\mid_{F_x})^{-1}|
    \le e^{-\lambda t}, \ % \quad\text{and}\quad
     |DX_t\mid_{E_x}|\le e^{-\lambda t}
\quad\text{and}\quad \vert \det (DX_t \mid_{F_x})\vert \ge e^{\lambda t}.
\end{align*}
We call it \emph{singular adapted metric}, for simplicity.
\end{definition}

This result is used in the proof of
Theorem~\ref{mthm:Jseparated-parthyp}. As a consequence
of the proof of Theorem~\ref{mthm:sec-exp-3d} we show
that for $C^1$ flows having a singular-hyperbolic set
$\Gamma$ there exists a metric adapted to the partial
hyperbolicity and the area expansion, as follows.

\begin{maintheorem}
  \label{mthm:adapted3d}
  Let $\Gamma$ be a singular-hyperbolic
  set for a $C^1$ three-dimensional vector field $X$.
  %, that is, $T_\Gamma M=E\oplus F$ is a partially hyperbolic
  %splitting satisfying (\ref{eq:domination}) together with
  %uniform contraction along $E$ and area expansion
  %(\ref{eq:sectional-expansion}) along $F$.
  Then $\Gamma$ admits a singular adapted metric.
 % $\langle\cdot,\cdot\rangle$ inducing a norm $|\cdot|$ on $E$
 % such that there exists $\lambda>0$ satisfying for all
 % $x\in \Gamma$ and $t>0$ simultaneously
 %   \begin{align*}
 %   |DX_t\mid E_x|\cdot\big|(DX_t\mid F_x)^{-1}|
 %   \le e^{-\lambda t} \quad\text{and}\quad
 %    |DX_t\mid E_x|\le e^{-\lambda t}
 % \end{align*}
%and also $\vert \det (DX_t \mid {F_x})\vert \ge e^{\lambda t}.$
\end{maintheorem}

%%%%%%%%%%%%%%%%%%%%%%%%%%%%%%%%%%%%%%%%%%%%%%%%%%%%%%%%%%%%%%%%%%

We complete the introduction with some conjectures about the
adapted metric as in Theorem \ref{mthm:adapted3d}.
% Indeed, we suppose that this should be more general.

\begin{conjecture}\label{conj:existsad}
  There exists an adapted metric for all
  sectional-hyperbolic sets for any $C^1$ vector field in
  any dimension.
\end{conjecture}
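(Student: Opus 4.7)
The plan is to mimic the three-dimensional argument of Theorem~\ref{mthm:adapted3d} by lifting to the exterior square and invoking Gourmelon's adapted-metric theorem twice. Let $\Gamma$ be a sectional-hyperbolic set for a $C^1$ vector field $X$ on an $n$-manifold, with splitting $E\oplus F$ and $\dim F = c\ge 2$. By Corollary~\ref{cor:bivectparthyp}, $\wedge^2 T_\Gamma M$ admits a $\wedge^2 DX_t$-invariant partially hyperbolic splitting $\widetilde E\oplus\widetilde F$ with $\widetilde F = \wedge^2 F$ uniformly expanded. Applying \cite{Goum07} to $DX_t$ yields an inner product $\langle\cdot,\cdot\rangle_1$ on $T_\Gamma M$ adapted to $E\oplus F$; applying it again to $\wedge^2 DX_t$ yields an inner product $\langle\cdot,\cdot\rangle_2$ on $\wedge^2 T_\Gamma M$ such that $\|\wedge^2 DX_t\mid\widetilde F\|_2\ge e^{\lambda t}$ and $\widetilde E$ is dominated with factor $e^{-\lambda t}$ for all $t>0$.

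I would then attempt to merge these two structures into a single Riemannian inner product on $T_\Gamma M$ via the $\J$-separation machinery. Following Section~\ref{sec:existence-adapted-in}, choose a continuous orthonormal frame $\{e_1,\dots,e_s,f_1,\dots,f_c\}$ compatible with $E\oplus F$ in which the quadratic form $\J$ has diagonal signature $(-\mathbf{1}_s,+\mathbf{1}_c)$, and set $|v|^2 = \xi\bigl(\J(v_E)^2+\J(v_F)^2\bigr)$. Proposition~\ref{pr:J-separated-tildeJ} then delivers strict $\J$-separation for $DX_t$, realizing domination of $F$ over $E$ and uniform contraction of $E$ in the adapted form. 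To realize area expansion along every 2-plane $L\subset F$, one would choose the frame so that each simple bivector $f_i\wedge f_j$ is also controlled, up to a uniform constant, by the bivector norm $\|\cdot\|_2$. Then $\|\wedge^2 DX_t(f_i\wedge f_j)\|_2\ge e^{\lambda t}$ transfers, via a Gram-determinant comparison, to a uniform area-expansion estimate for every 2-plane in $F$.

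The main obstacle is that in dimensions $n\ge 4$ there is no canonical identification of $\wedge^2 F$ with a tangent object analogous to the cross-product identification~(\ref{eq:def-phi-t}) that drove the three-dimensional proof; consequently an inner product on $\wedge^2 F$ is in general \emph{not} induced by any inner product on $F$, since the image of the inducing map is a nonlinear subset of bivector metrics cut out by Pl\"ucker-type constraints. Reconciling the two adapted metrics therefore requires either a delicate choice of frame that simultaneously diagonalizes $\langle\cdot,\cdot\rangle_1|_F$ and $\langle\cdot,\cdot\rangle_2|_{\wedge^2 F}$ up to uniform equivalence, or a direct time-integrated construction such as $|v|^2=\int_0^T\varphi(s)|DX_s v|_0^2\,ds$ on $F$, with $\varphi$ tuned so that $|\det(DX_1|_L)|>1$ uniformly in $L$ over the compact Grassmannian of 2-planes of $F_\Gamma$. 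Controlling area expansion uniformly over all $L$, rather than only on frame pairs, is the step I expect to be most delicate. Once this is settled, the conclusion that $E$ is indeed uniformly contracted (i.e. $|DX_t|_E|\le e^{-\omega t}$) would follow as in the proof of Theorem~\ref{mthm:adapted3d}, using Lemma~\ref{le:flow-center}, the non-triviality of $\Gamma$, and continuity of the derivative cocycle at singularities.
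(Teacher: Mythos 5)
First, note that the statement you were asked to prove is recorded in the paper as Conjecture~\ref{conj:existsad}, not as a theorem: the authors explicitly leave it open, and Section~\ref{sec:conj} contains no argument for it. There is therefore no ``paper proof'' to compare against; the only question is whether your sketch would actually settle it.

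It does not. You correctly identify the central obstruction---that for $n\ge 4$ an inner product on $\wedge^2 F$ is in general not induced by any inner product on $F$, so the adapted bivector metric obtained from \cite{Goum07} applied to $\wedge^2 DX_t$ cannot be pushed down to $T_\Gamma M$ the way the cross-product identification~(\ref{eq:def-phi-t}) allows in dimension three. This is precisely the phenomenon the paper's Example~\ref{ex:no-form} isolates: the standard Gram-determinant lift~(\ref{eq:innerproductbivectors}) of a quadratic form on $T_\sigma M$ to $\wedge^2 T_\sigma M$ produces bivectors of the wrong sign once the $\J$-negative space has dimension at least $2$, so the induced form no longer separates $\widetilde E$ from $\widetilde F$, and the whole $\J$-separation machinery of Proposition~\ref{pr:J-separated-tildeJ} breaks down for the exterior square. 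Your two candidate remedies---(i) simultaneously diagonalizing the two Gourmelon metrics on $F$ and on $\wedge^2 F$ up to uniform equivalence, and (ii) a time-averaged metric $|v|^2=\int_0^T\varphi(s)\,|DX_s v|_0^2\,ds$ tuned so that $|\det(DX_1|_L)|>1$ uniformly over the Grassmannian of $2$-planes $L\subset F$---are reasonable directions, but you carry out neither, and the uniformity over \emph{all} $2$-planes rather than over a fixed frame is exactly where the difficulty concentrates (a frame-wise expansion estimate does not control an arbitrary $L$, since $\|\wedge^2 DX_t(u\wedge v)\|$ depends on the pair, not on $\det(DX_t|_L)$ alone, unless the metric on $\wedge^2 F$ is induced). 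What you have written is, in effect, an accurate diagnosis of why the proof of Theorem~\ref{mthm:adapted3d} does not transfer to higher codimension---which is the reason the authors state this as a conjecture---not a proof of it.
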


Moreover, we should extend this to more general notions of
sectional-expansion of area.

\begin{conjecture}\label{conj:existskexp}
  There exists an adapted metric for all compact invariant
  subsets of a $C^1$ vector field $X$ on a manifold $M$,
  which are partially hyperbolic with splitting $E\oplus F$,
  $E$ uniformly contracted and all $k$-subspaces of $F$ are
  volume-expanding.
 \end{conjecture}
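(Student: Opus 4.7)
The plan is to extend the proof of Theorem \ref{mthm:adapted3d} from $k=2$ to arbitrary $k$. By the natural $k$-th exterior power generalization of Theorem \ref{mthm:bivectparthyp} (indicated in the paper), the hypothesis that every $k$-dimensional subspace of $F$ is volume-expanding is equivalent to the splitting $\widetilde E\oplus\widetilde F$ of $\wedge^k T_\Gamma M$ being partially hyperbolic with $\widetilde F=\wedge^k F$ uniformly expanded by $\wedge^k DX_t$. Gourmelon's theorem \cite{Goum07} then furnishes an inner product on $\wedge^k T_\Gamma M$ adapted simultaneously to this partial hyperbolicity and to the expansion of $\widetilde F$, with explicit rates $e^{\pm\lambda t}$.

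The next step is to pull back this information to a single Riemannian metric on $T_\Gamma M$ (on a neighborhood of $\Gamma$) which simultaneously witnesses domination of $E\oplus F$, uniform contraction on $E$, and uniform volume expansion of every $k$-dim subspace of $F$. Following Section \ref{sec:existence-adapted-in}, one chooses a continuous orthonormal frame respecting $E\oplus F$, defines a quadratic form $\J_x$ diagonal with $-1$ entries on $E_x$ and $+1$ on $F_x$, and looks for rate functions $\delta$ and $\delta_k$ such that $\J'>\delta\J$ (yielding strict $\J$-separation of $DX_t$ by Proposition \ref{pr:J-separated-tildeJ}) together with the analogous rate inequality for $\wedge^k DX_t$, relating the two via the formula for the infinitesimal generator of $\wedge^k DX_t$ in terms of $DX$. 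Singularities in $\Gamma$ are then handled as in the three-dimensional case: Lemma \ref{le:flow-center} gives $X(x)\in F_x$ for every $x\in\Gamma$, and continuity of the cocycle together with density of regular orbits in the non-trivial part of $\Gamma$ extends the contraction estimate from regular points to any $\sigma\in\sing(X)\cap\Gamma$ by approximation.

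The main obstacle is the compatibility between the inner product on $F$ and the induced inner product on $\wedge^k F$. In the three-dimensional case with $k=2$, the one-dimensionality of $\wedge^2 F$ and its canonical identification with $T_\Gamma M$ via the cross product make the choice $\delta_2=2\trace(DX)-\delta$ uniquely determined, so the induced metric works automatically. For arbitrary $k$ and $c=\dim F$, however, $\dim\wedge^k F=\binom{c}{k}$ may be large, and the induced norm on $\wedge^k F$ is highly constrained by the norm on $F$, so matching it with a prescribed Gourmelon-adapted norm on $\widetilde F$ is in general overdetermined. Example \ref{ex:no-form} confirms that the naive construction of $\widetilde\J$ from $\J$ via the determinantal formula \eqref{eq:innerproductbivectors} already fails when the index of $\J$ exceeds one. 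A plausible way forward is a direct construction of the adapted metric on $F$ as a time-average of the form $|v|_x^2=\int_0^T e^{-2\alpha s}\|DX_s v\|^2\,ds$ \emph{\`a la} Gourmelon, with $\alpha$ chosen so that the induced norm on $\wedge^k F$ realizes the required volume expansion; alternatively, one may need a new averaging procedure adapted to the flag bundle of $k$-dim subspaces of $F$.
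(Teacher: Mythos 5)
The statement you were asked to prove is labeled a \emph{conjecture} in the paper (Section~\ref{sec:conj}): the authors present it as an open problem and supply no proof. There is therefore no paper argument to compare against, and your proposal must be judged as an attempt on an open question rather than as a reconstruction of a known proof.

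Read that way, your proposal is not a proof, and to your credit you say so: the final paragraph concedes the central gap and offers only a ``plausible way forward.'' The step that fails is the pull-back from $\wedge^k T_\Gamma M$ back to $T_\Gamma M$. Gourmelon's theorem does give an adapted metric on the exterior bundle for the partially hyperbolic splitting $\widetilde E\oplus\widetilde F$ under $\wedge^k DX_t$, but there is no reason that metric should be the $k$-th exterior power of any Riemannian metric on $T_\Gamma M$. In the proof of Theorem~\ref{mthm:adapted3d} this is finessed by dimension: with $\dim M=3$ and $k=2$, the bundle $\wedge^2 T_\Gamma M$ is again three-dimensional and canonically identified with $T_\Gamma M$, and $\widetilde F=\wedge^2 F$ is a line, so there is essentially a unique compatible lift of the adapted inner product. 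For general $k$ and $c=\dim F$, one has $\dim\wedge^k F=\binom{c}{k}$, and the set of inner products on $\wedge^k F$ induced from inner products on $F$ has dimension of order $\binom{c+1}{2}$, far smaller than the dimension $\binom{\binom{c}{k}+1}{2}$ of the space of all inner products on $\wedge^k F$; hence the Gourmelon metric on $\wedge^k T_\Gamma M$ will generically not descend. Your closing suggestion of a time-averaged norm $|v|_x^2=\int_0^T e^{-2\alpha s}\|DX_s v\|^2\,ds$ on $F$, or an averaging over the flag bundle, is a reasonable direction, but you would need to show that a single choice of $\alpha$ (or weight) simultaneously yields the exponential expansion for every $k$-frame in $F$, and you do not carry that out. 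In short, you have correctly diagnosed why this is stated as a conjecture rather than proved as a theorem --- the difficulty is exactly the one already signalled by Example~\ref{ex:no-form} --- but you have not closed the gap.
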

 In terms of exterior powers, the last condition on
 Conjecture~\ref{conj:existskexp} means that there are
 $C,\lambda>0$ such that
  $$\|(\wedge^k DX_t)\cdot(v_1\wedge\dots\wedge
  v_k)\|\ge C e^{\lambda t},$$ for all $t>0$ and every
  $k$-frame $v_1,\dots, v_k$ inside $F$, with $2 \leq k
  \leq \dim(F)$.  In other words, we can find an
  adapted Riemannian metric for $M$ whose naturally
  induced norm in the $k$-exterior product of $TM$
  satisfies the above inequality for some $\lambda>0$
  and $C=1$.

This should also be true for discrete dynamical systems.

\begin{conjecture}
  There exists an adapted metric for all compact
  invariant subsets of a $C^1$ diffeomorphism admitting
  a partially hyperbolic splitting $E\oplus F$, where
  $E$ is uniformly contracted and all $k$-subspaces of
  $F$ are volume-expanding.
\end{conjecture}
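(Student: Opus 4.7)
The plan is to mirror the three-dimensional argument of Theorem~\ref{mthm:adapted3d} in the discrete and higher-dimensional setting. The first step is to invoke the discrete analogue of Theorem~\ref{mthm:bivectparthyp} (as pointed out in Remark~\ref{rmk:discretecase}): the hypothesis that every $k$-dimensional subspace of $F$ is volume-expanding for $Df$ translates into $\widetilde{F} = \wedge^k F$ being uniformly expanded by $\wedge^k Df$ inside a partially hyperbolic splitting $\widetilde{E} \oplus \widetilde{F}$ of $\wedge^k T_\Lambda M$. Applying the discrete version of Gourmelon's adapted metric theorem \cite{Goum07} to this exterior splitting yields an inner product on $\wedge^k T_\Lambda M$ with respect to which $\wedge^k Df$ contracts $\widetilde{E}$ and expands $\widetilde{F}$ by a uniform factor $e^{\lambda}$ in a single iterate.

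Second, following the blueprint of Subsection~\ref{sec:existence-adapted-in}, one would use this inner product together with the splitting $\widetilde E \oplus \widetilde F$ to build a field of indefinite non-degenerate quadratic forms $\widetilde\J$ on $\wedge^k T_\Lambda M$ for which $\wedge^k Df$ is strictly $(-\widetilde\J)$-separated. The key step is then to reverse the computation that, in the three-dimensional case, produced the identity $\delta_2 = 2\,\trace(DX) - \delta$: from $\widetilde\J$ one wants to construct $\J$ on $T_\Lambda M$ compatible with the splitting $E \oplus F$ in such a way that strict $(-\widetilde\J)$-separation of $\wedge^k Df$ becomes equivalent to strict $\J$-separation of $Df$ together with a sign condition on the corresponding rate function $\delta$. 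Once such a $\J$ is in hand, the discrete version of Proposition~\ref{pr:J-separated-tildeJ} combined with the adapted-norm construction from \cite{arsal2012a} delivers a norm of the shape $|w|^2 = -\J(w_E) + \J(w_F)$ for which the three desired adapted inequalities, namely uniform contraction on $E$, domination of $E$ by $F$, and uniform $k$-volume expansion on $F$, hold simultaneously.

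The principal obstacle is precisely the transport step from the exterior power back to the tangent bundle. In the three-dimensional case, $\dim\widetilde F = 1$ and $\widetilde E$ admits an almost-orthogonal identification with $E$, so one may take the same diagonal form $\diag\{-1,1,1\}$ in both settings; Example~\ref{ex:no-form}, however, shows that the naive bilinear extension of a quadratic form on $T_\Lambda M$ to $\wedge^k T_\Lambda M$ fails to reflect the partially hyperbolic behaviour as soon as $\dim E \ge 2$. One would therefore have to either prescribe $\widetilde\J$ as induced from some $\J$ built directly on $TM$ adapted to $E \oplus F$ with index $\dim E$ (as in the codimension-one discussion of Example~\ref{ex:Jcodim1}) and verify a posteriori that the resulting form on $\wedge^k T_\Lambda M$ still provides the required expansion, or else construct the two forms independently and glue the two ensuing adapted norms into a single Riemannian norm on $TM$ realizing all three bounds at once. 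Producing such a compatible pair of forms, or equivalently showing that a single metric can be chosen to realize all three adapted inequalities simultaneously rather than one at a time, is where the real difficulty lies and is presumably the reason the statement is posed as a conjecture.
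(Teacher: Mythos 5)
The statement you were asked to prove is posed in the paper as a \emph{conjecture} with no proof supplied; there is therefore no paper argument to compare against, and you were right not to claim a complete proof. Your outline is, in fact, a faithful reconstruction of the only natural route suggested by the text: apply the discrete analogue of Theorem~\ref{mthm:bivectparthyp} (Remark~\ref{rmk:discretecase}), pass to the exterior power $\wedge^k Df$, invoke Gourmelon's adapted-metric theorem there, and try to transport the resulting $(-\widetilde\J)$-separation back to a single $\J$-separation on $T_\Lambda M$ as in Subsection~\ref{sec:existence-adapted-in}.

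You also put your finger on exactly the obstruction the paper itself flags. In the three-dimensional proof of Theorem~\ref{mthm:adapted3d}, the transport step is painless because $\dim\widetilde F = 1$ and the Hodge-type identification lets one reuse the same diagonal form $\diag\{-1,1,1\}$ on both $T_\Lambda M$ and $\wedge^2 T_\Lambda M$. As soon as $\dim E \ge 2$, Example~\ref{ex:no-form} shows the canonical bilinear extension~(\ref{eq:innerproductbivectors}) produces mixed signs on $\widetilde E$ and $\widetilde F$ and fails to encode the domination; Example~\ref{ex:Jcodim1} recovers the construction only in codimension one. So the missing ingredient is precisely a replacement for the identity $\delta_2 = 2\trace(DX) - \delta$ (or its discrete analogue) that relates the rate function of the exterior cocycle to that of the original one when the negative index exceeds one, together with a single quadratic form $\J$ on $TM$ whose induced structure on $\wedge^k TM$ is compatible with $\widetilde E \oplus \widetilde F$. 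Absent that, Gourmelon's theorem gives two separate adapted metrics — one on $TM$ for the domination and contraction, one on $\wedge^k TM$ for the volume expansion — with no guarantee they come from a common Riemannian metric. Your assessment that this gluing step is the genuine open difficulty, and the reason the statement remains a conjecture, is correct.
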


%%%%%%%%%%%%%%%%%%%%%%%%%%%%%%%%%%%%%%%%%%%%%%%%%%%%%%%%%%

\subsection{Organization of the text}
\label{sec:organization-text}

The main definitions and results on linear multiplicative
cocycles needed for our arguments here are presented in
Section~\ref{sec:prelim-definit}.

The proofs of Theorem~\ref{mthm:bivectparthyp},
Theorem~\ref{mthm:sec-exp-3d} and
Theorem~\ref{mthm:adapted3d}, presented in
Subsections~\ref{sec:dominat-splitt-exter},
\ref{sec:three-dimens-case} and
\ref{sec:existence-adapted-in} respectively, depend on
several results about $\J$-separation for linear
multiplicative cocycles given in
Section~\ref{sec:prelim-definit}.
% In Section~\ref{sec:conj}, we present some conjectures
% related to Theorem \ref{mthm:adapted3d}.
Finally, we present some examples of application in
Section~\ref{sec:comments-organiz-tex}.

\subsection*{Acknowledgments}
A significant part of the work was developed in L.S. thesis
\cite{luciana-tese} at Instituto de Mate\-m\'atica da
Universidade Federal do Rio de Janeiro (UFRJ), postdoc
research at IMPA and Instituto de Matem\'atica da
Universidade Federal da Bahia.

%%%%%%%%%%%%%%%%%%%%%%%%%%%%%%%%%%%%%%%%%%%%%%%%%%%%%%%%%

\section{Some definitions and useful results}
\label{sec:prelim-definit}

\subsection{Fields of quadratic forms, positive and negative
  cones}
\label{sec:fields-quadrat-forms}

Let $E_U$ be a finite dimensional vector bundle with
inner product $\langle\cdot,\cdot\rangle$ and
base given by the trapping region $U\subset M$. Let
$\J:E_U\to\RR$ be a continuous family of quadratic
forms $\J_x:E_x\to\RR$ which are non-degenerate and
have index $0<q<\dim(E)=n$.  The index $q$ of $\J$
means that the maximal dimension of subspaces of
non-positive vectors is $q$. Using the inner product,
we can represent $\J$ by a family of self-adjoint
operators $J_x:E_x\circlearrowleft$ as $\J_x(v)=\langle
J_x(v),v\rangle, v\in E_x, x\in U$.

We also assume that $(\J_x)_{x\in U}$ is continuously
differentiable along the flow.  The continuity assumption on
$\J$ means that for every continuous section $Z$ of $E_U$
the map $U\ni x\mapsto \J(Z(x))\in\RR$ is
continuous. The $C^1$ assumption on $\J$ along the flow
means that the map $\RR\ni t\mapsto \J_{X_t(x)} (Z(X_t(x)))\in \RR$
is continuously differentiable for all $x\in U$ and each
$C^1$ section $Z$ of $E_U$.

Using Lagrange diagonalization of a quadratic form, it is
easy to see that the choice of basis to diagonalize $\J_y$
depends smoothly on $y$ if the family $(\J_x)_{x\in U}$ is
smooth, for all $y$ close enough to a given $x$. Therefore,
choosing a basis for $T_x$ adapted to $\J_x$ at each $x\in
U$, we can assume that locally our forms are given by
$\langle J_x(v),v\rangle$ with $J_x$ a diagonal matrix whose
entries belong to $\{\pm1\}$, $J_x^*=J_x$, $J_x^2=I$ and the
basis vectors depend as smooth on $x$ as the family of forms
$(\J_x)_x$.

We let $\cC_\pm=\{C_\pm(x)\}_{x\in U}$ be the family of
positive and negative cones associated to $\J$
\begin{align*}
  C_\pm(x):=\{0\}\cup\{v\in E_x: \pm\J_x(v)>0\}  \quad x\in U
\end{align*}
and also let $\cC_0=\{C_0(x)\}_{x\in U}$ be the correspoing
family of zero vectors $C_0(x)=\J_x^{-1}(\{0\})$ for all
$x\in U$.

\subsection{Linear multiplicative cocycles over flows}
\label{sec:linear-multipl-cocyc}

Let $A:E\times\RR\to E$ be a smooth map given by a collection of linear
bijections
\begin{align*}
  A_t(x): E_x\to E_{X_t(x)}, \quad x\in M, t\in\RR,
\end{align*}
where $M$ is the base space of the finite dimensional vector
bundle $E$, satisfying the cocycle property
\begin{align*}
  A_0(x)=Id, \quad A_{t+s}(x)=A_t(X_s(x))\circ A_s(x), \quad x\in M, t,s\in\RR,
\end{align*}
with $\{X_t\}_{t\in\RR}$ a smooth flow over $M$.  We note
that for each fixed $t>0$ the map $A_t: E\to E, v_x\in E_x
\mapsto A_t(x)\cdot v_x\in E_{X_t(x)}$ is an automorphism of
the vector bundle $E$.

The natural example of a linear multiplicative cocycle over
a smooth flow $X_t$ on a manifold is the derivative cocycle
$A_t(x)=DX_t(x)$ on the tangent bundle $TM$ of a finite
dimensional compact manifold $M$.

The following definitions are fundamental to state our main
result.

\begin{definition}
\label{def:J-separated}
Given a continuous field of non-degenerate quadratic forms
$\J$ with constant index on the positively invariant open
subset $U$ for the flow $X_t$, we say that the cocycle
$A_t(x)$ over $X_t$ is
\begin{itemize}
\item $\J$-\emph{separated} if $A_t(x)(C_+(x))\subset
  C_+(X_t(x))$, for all $t>0$ and $x\in U$ (simple cone invariance);
\item \emph{strictly $\J$-separated} if $A_t(x)(C_+(x)\cup
  C_0(x))\subset C_+(X_t(x))$, for all $t>0$ and $x\in U$
  (strict cone invariance).
% \item $\J$-\emph{monotone} if $\J_{X_t(x)}(A_t(x)v)\ge \J_x(v)$, for each $v\in
%   T_xM\setminus\{\vec0\}$ and $t>0$;
% \item \emph{strictly $\J$-monotone} if $\partial_t\big(\J_{X_t(x)}(A_t(x)v)\big)\mid_{t=0}>0$,
%   for all $v\in T_xM\setminus\{0\}$, $t>0$ and $x\in U$;
% \item $\J$-\emph{isometry} if $\J_{X_t(x)}(A_t(x)v) = \J_x(v)$, for each $v\in T_xM$ and $x\in U$.
\end{itemize}
\end{definition}
% Thus, $\J$-separation corresponds to simple cone invariance
% and strict $\J$-separation corresponds to strict cone
% invariance under the action of $A_t(x)$.

We say that the flow $X_t$ is (strictly)
$\J$-\emph{separated} on $U$ if $DX_t(x)$ is (strictly)
$\J$-\emph{separated} on $T_UM$.
% Analogously, the flow of $X$ on $U$ is (strictly) $\J$-\emph{monotone} if
% $DX_t(x)$ is (strictly) $\J$-\emph{monotone}.

\begin{remark}\label{rmk:J-separated-C-}
  If a flow is strictly $\J$-separated, then for $v\in T_xM$
  such that $\J_x(v)\le0$ we have
  $\J_{X_{-t}(x)}(DX_{-t}(v))<0$ for all $t>0$ and $x$ such
  that $X_{-s}(x)\in U$ for every $s\in[-t,0]$. Indeed,
  otherwise $\J_{X_{-t}(x)}(DX_{-t}(v))\ge0$ would imply
  $\J_x(v)=\J_x\big(DX_t(DX_{-t}(v))\big)>0$, contradicting
  the assumption that $v$ was a non-positive vector.

  This means that a flow $X_t$ is strictly
    $\J$-separated if, and only if, its time reversal
    $X_{-t}$ is strictly $(-\J)$-separated. % The same remark
  % applies in the case of a cocycle.
\end{remark}

A vector field $X$ is $\J$-\emph{non-negative} on $U$ if
$\J(X(x))\ge0$ for all $x\in U$, and
$\J$-\emph{non-positive} on $U$ if $\J(X(x))\leq 0$ for all
$x\in U$. When the quadratic form used in the context is
clear, we will simply say that $X$ is non-negative or
non-positive.

%%%%%%%%%%%%%%%%%%%%%%%%%%%%%%%%%%%%%%%%%%%%%%%%%%%%%%%%

\subsection{Properties of $\J$-separated linear multiplicative cocycles}
\label{sec:propert-j-separat}

We present some useful properties about $\J$-separated
linear cocycles whose proofs can be found in
\cite{arsal2012a}.

Let $A_t(x)$ be a linear multiplicative cocycle over
$X_t$. We define the infinitesimal generator of $A_t(x)$ by
\begin{align}\label{eq:infinitesimal-gen}
  D(x):=\lim_{t\to0}\frac{A_t(x)-Id}t.
\end{align}

The following is the basis of our arguments leading to
Theorem~\ref{mthm:Jseparated-parthyp}.

The area under the function $\delta$ provided by
Proposition~\ref{pr:J-separated-tildeJ} allows us to detect
different dominated splittings with respect to linear
multiplicative cocycles on vector bundles (Proposition~\ref{pr:char-dom-split}).
For this, define the function
    \begin{align}\label{eq:delta-area}
     \Delta_a^b(x):=\int_a^b\delta(X_s(x))\,ds, \quad
     x\in\Gamma, a,b\in\RR.
    \end{align}

\begin{proposition}\cite[Proposition 2.7]{arsal2012a}
  \label{pr:J-separated-tildeJ}
  Let $A_t(x)$ be a cocycle over $X_t$ defined on an open
  subset $U$ and $D(x)$ its infinitesimal generator. Then
  \begin{enumerate}
  \item $\tilde\J(v)=\partial_t \J(A_t(x)v) =
    \langle \tilde J_{X_t(x)} A_t(x)v,A_t(x)v\rangle$ for
    all $v\in E_x$ and $x\in U$, where
    \begin{align}\label{eq:J-separated-tildeJ}
      \tilde J_x:= J\cdot D(x) + D(x)^* \cdot J
    \end{align}
    and $D(x)^*$ denotes the adjoint of the linear map
    $D(x):E_x\to E_x$ with respect to the adapted inner
    product at $x$;
  \item the cocycle $A_t(x)$ is $\J$-separated if, and only
    if, there exists a neighborhood $V$ of $\Lambda$,
    $V\subset U$ and a function $\delta:V\to\RR$ such that
    \begin{align}\label{eq:J-ge}
      \tilde \J_x\ge\delta(x)\J_x
      \quad\text{for all}\quad x\in V.
    \end{align}
    In particular we get $\partial_t\log|\J(A_t(x)v)|\ge
    \delta(X_t(x))$, $v\in E_x, x\in V, t\ge0$;
  \item if the inequalities in the previous item are strict,
    then the cocycle $A_t(x)$ is strictly
    $\J$-separated. Reciprocally, if $A_t(x)$ is strictly
    $\J$-separated, then there exists a compatible family
    $\J_0$ of forms on $V$ satisfying the strict
    inequalities of item (2).
  \item For a $\J$-separated cocycle $A_t(x)$, we have
    $\frac{|\J(A_{t_2}(x)v)|}{|\J(A_{t_1}(x)v)|}\ge \exp
    \Delta_{t_1}^{t_2}(x)$ for all $v\in E_x$ and reals
    $t_1<t_2$ so that $\J(A_t(x)v)\neq0$ for all $t_1\le
    t\le t_2$, where $\Delta_{t_1}^{t_2}(x)$ was defined
    in~(\ref{eq:delta-area}).
  % \item if $A_t(x)$ is $\J$-separated and $x\in\Gamma(U), v\in
  %   C_+(x)$ and $w\in C_-(x)$ are non-zero vectors, then for
  %   every $t>0$ such that $A_s(x)w\in C_-(X_s(x))$ for all
  %   $0<s<t$
  % \begin{align}\label{eq:Jquotient-upper}
  %   \frac{|\J(A_t(x)w)|}{\J(A_t(x)v)}
  %   \le
  %   \frac{|\J(w)|}{\J(v)} \exp \big(2\Delta_0^t(x)).
  % \end{align}
\item we can bound $\delta$ at every $x\in\Gamma$ by
    $\inf_{v\in C_+(x)}\frac{\tilde\J(v)}{\J(v)}
    \le\delta(x)\le
    \sup_{v\in C_-(x)}\frac{\tilde\J(v)}{\J(v)}.$
  \end{enumerate}
\end{proposition}

\begin{remark}
  \label{rmk:strictly-J-separated}
  We stress that the necessary and sufficient condition in
  items (2-3) of Proposition~\ref{pr:J-separated-tildeJ},
  for (strict) $\J$-separation, shows that a cocycle
  $A_t(x)$ is (strictly) $\J$-separated if, and only if, its
  inverse $A_{-t}(x)$ is (strictly) $(-\J)$-separated.
\end{remark}

\begin{remark}
  \label{rmk:Jexp-ineq}
  Item (2) above of Proposition~\ref{pr:J-separated-tildeJ}
  shows that $\delta$ is a measure of the ``minimal
  instantaneous expansion rate'' of $|\J\circ A_t(x)|$.%  on
  % positive vectors; and item (5) shows in addition that
  % $\delta$ is also a bound for the ``instantaneous variation
  % of the ratio'' between $|\J\circ A_t(x)|$ on negative and
  % positive vectors.
\end{remark}

\begin{proposition}\cite[Theorem 2.23]{arsal2012a}
  \label{pr:char-dom-split}
  Let $\Gamma$ be a compact invariant set for $X_t$
  admitting a dominated splitting $E_\Gamma= F_-\oplus F_+$
  for $A_t(x)$, a linear multiplicative cocycle over
  $\Gamma$ with values in $E$. Let $\J$ be a $C^1$ family
    of indefinite quadratic forms such that $A_t(x)$ is
    strictly $\J$-separated. Then
  \begin{enumerate}
  \item $F_-\oplus F_+$ is partially hyperbolic with % $F_-$
    % not uniformly contracting and
    $F_+$ uniformly expanding
    if % , and only if,
    $\Delta_0^t(x)\xrightarrow[t\to+\infty]{} +\infty$
    for all $x\in\Gamma$.
  \item $F_-\oplus F_+$ is partially hyperbolic with $F_-$
    uniformly contracting %and $F_+$ not uniformly expanding
    if %, and only if,
    $\Delta_0^t(x)\xrightarrow[t\to+\infty]{}-\infty$
    for all $x\in\Gamma$.
  \item $F_-\oplus F_+$ is uniformly hyperbolic
    if, and only if, there exists a compatible family $\J_0$
    of quadratic forms in a neighborhood of $\Gamma$ such
    that $\J_0'(v)>0$ for all $v\in E_x$ and all $x\in\Gamma$.
  \end{enumerate}
\end{proposition}
Above we write $\tilde\J(v)= <\tilde J_x v, v>$, where
$\tilde J_x$ is given in Proposition~\ref{pr:J-separated},
that is, $\tilde\J(v)$ is the time derivative of $\J$ under the
action of the flow.

We use Proposition~\ref{pr:char-dom-split} to obtain
sufficient conditions for a flow $X_t$ on $3$-manifold
to have a $\wedge^2 DX_t$-invariant one-dimensional
uniformly expanding direction orthogonal to the
two-dimensional center-unstable bundle.

\begin{remark}
  \label{rmk:correction}
  The proof of \cite[Theorem 2.23]{arsal2012a} on which
  Proposition~\pageref{pr:J-separated-tildeJ} is based, is
  correct for the sufficient conditions in items (1-2) (but
  the proof of necessity in items (1-2) in \cite{arsal2012a}
  is wrong).
\end{remark}

%%%%%%%%%%%%%%%%%%%%%%%%%%%%%%%%%%%%%%%%%%%%%%%%%%%%%%%%
\section{The exterior square of the cocycle}
\label{sec:exteri-square-cocycl}

%Now we are going to prove Theorem \ref{mthm:Jseparated-parthyp}.

We consider the action of the cocycle $DX_t(x)$ on
$k$-vector first and bivectors later, that is, the exterior
square $\wedge^kDX_t$ of the cocycle acting on
$\wedge^kT_\Gamma M$ with $k>2$ and then $k=2$, to deduce
Theorem~\ref{mthm:bivectparthyp} and Corollary~\ref{cor:bivectparthyp} first
and then prove
Theorem~\ref{mthm:sec-exp-3d} and
Theorem~\ref{mthm:adapted3d}.

\subsection{Dominated splitting and the exterior cocycle}
\label{sec:dominat-splitt-exter}

We denote by $\|\cdot\|$ the standard norm on bivectors
induced by the Riemannian norm of $M$; see,
e.g. \cite{arnold-l-1998}. We write $m=\dim M$.

\begin{proof}[Proof of Theorem~\ref{mthm:bivectparthyp}]
  We assume that $T_\Gamma M$ admits a dominated splitting
  $E_\Gamma\oplus F_\Gamma$ with $\dim E_\Gamma=s$ and $\dim
  F_\Gamma=c$. So there
  exists $\eta>0$ such that, for any $X_t$-invariant
  probability measure $\mu$ supported on $\Gamma$, the
  Lyapunov exponents of $DX_t$ with respect to $\mu$ are
  (repeated according to multiplicity)
  $\lambda_1\le\dots\le\lambda_s\le\lambda_{s+1}\le\dots\le\lambda_m$
  and satisfy $\lambda_{s+1}-\lambda_s>\eta$.

  The Lyapunov exponents of $\wedge^cDX_t$ are given by
  $\{\lambda_{i_1}+\dots+\lambda_{i_c}\}_{1\le i_1 <\dots<\i_c \le s+c}$; see
  e.g. \cite[Chapter 3]{arnold-l-1998}. Hence, for $\tilde
  F=\wedge^cDX_t$ and $\tilde E$ as in the statement of
  Theorem \pageref{mthm:bivectparthyp}, we have that
  \begin{align*}
    (\lambda_{i_1}+\dots+\lambda_{i_n})+(\lambda_{h_1}+\dots+\lambda_{h_m})+n\eta
    <
    \lambda_{s+1}+\dots+\lambda_{s+c}
  \end{align*}
  for all $1\le i_1<\dots<i_n\le s$, $s+1\le h_1<\dots<h_m\le c$ with
  $m+n=c,m,n\ge1$.
  This implies that for $\mu$-almost every $x\in\Gamma$
  \begin{align}\label{eq:sumliap}
    \lim_{t\to+\infty}\frac1t\log&\big(\|\wedge^cDX_t\mid
    \widetilde E_x\|\cdot\|(\wedge^cDX_t\mid \widetilde
    F_x)^{-1}\|\big) \nonumber
    \\
    &= \lambda_{\max\{s-c,1\}}+\dots+\lambda_{\max\{s-c,1\}+c} -
    (\lambda_{s+1}+\dots+\lambda_{s+c}) \le -\eta,
  \end{align}
that is, the maximum rate of expansion along $\widetilde E$
minus the minimum rate of expansion along $\widetilde F$.

  We now set $f_t(x)=\log\big(\|\wedge^cDX_t\mid \widetilde
  E_x\|\cdot\|(\wedge^cDX_t\mid \widetilde
  F_x)^{-1}\|\big)$ and, since we obtain~(\ref{eq:sumliap})
  for an arbitrary $X_t$-invariant probability measure, we
  can apply the following result which is an improvement from \cite[Proposition
  3.4]{arbieto2010}.

  \begin{lemma}\cite[Corollary 4.2]{ArbSal2011}
    \label{le:sectional-arbieto}
   Let $\{t\mapsto f_t:S\to \mathbb{R}\}_{t\in \mathbb{R}}$ be a continuous family of continuous functions
   which is subadditive and suppose that $\int \wt{f}(x) d\mu < 0$
   for every $\mu\in \mathcal{M}_X$, with $\wt{f}(x):=\lim\limits_{t\to+\infty}\frac{1}{t}f_t(x)$.
   Then there exist a $T > 0$ and a constant $\lambda < 0$
   such that for every $x\in S$ and every $t \geq T$:
   $$f_t(x) \leq  \lambda t.$$
%   Let $\{f_t:\Gamma\to\RR\}_{t\in\RR}$ be a continuous
%    family of subadditive continuous functions such that
%    there exists $\eta>0$ satisfying $\liminf_{t\to+\infty}
%    f_t(x)/ t <-\eta<0$ for $x$ in a subset $Z$ of $\Gamma$
%    with measure one with respect to every invariant
%    probability measure on $\Gamma$. Then there exists a
%    constant $C>0$ such that for all $t\ge0$ we have $\exp
%    f_t(x)\le C e^{-\eta t/2}$ for all $x\in\Gamma$.
  \end{lemma}

  We thus have $f_t(x)\le \kappa -\eta t, t\ge0,
  x\in\Lambda$ for a constant $\kappa>0$, as required for a
  dominated splitting with respect to $\wedge^cDX_t$. This
  proves sufficiency in the first part of
  Theorem~\ref{mthm:bivectparthyp}.

  For necessity, we just have to observe that domination of
  $\widetilde E\oplus\widetilde F$ by the action of
  $\wedge^cDX_t$ ensures (\ref{eq:sumliap}) holds for the
  Lyapunov spectrum of any given $X_t$-invariant
  probabililty measure $\mu$. Hence, in particular, we obtain
  \begin{align}\label{eq:diflyap}
    \lambda_s-\lambda_{s+1}=
    \lambda_s+\lambda_{s+2}+\dots+\lambda_{s+c}
    -
    (\lambda_{s+1}+\lambda_{s+2}+\dots+\lambda{s+c})
    < -\eta.
  \end{align}
  We now set $f_t(x)=\log\big(\|DX_t\mid
  E_x\|\cdot\|(DX_t\mid F_x)^{-1}\|\big)$ and, since we
  obtain~(\ref{eq:diflyap}) for an arbitrary $X_t$-invariant
  probability measure, we can apply again
  Lemma~\ref{le:sectional-arbieto} and deduce $f_t(x)\le
  \kappa -\eta t, t\ge0, x\in\Lambda$ for a constant
  $\kappa>0$, proving domination with respect to
  $DX_t$. This completes the proof of Theorem~\ref{mthm:bivectparthyp}.
\end{proof}

Now we prove Corollary~\ref{cor:bivectparthyp}. 

 \begin{proof}[Proof of Corollary~\ref{cor:bivectparthyp}]

  For the Corollary~\ref{cor:bivectparthyp},
  we assume that $T_\Gamma M$ admits a sectional hyperbolic
  splitting $E_\Gamma\oplus F_\Gamma$ with $\dim E_\Gamma=s$
  and $\dim F_\Gamma=c$. Then if $x\in\Gamma$ and
  $B=\{e_1,\dots,e_{c}\}$ is a basis for $F_x$, it is
  obvious that we can find $\lambda>0$ such that $\|DX_t
  u\wedge DX_t v\|\ge Ce^{\lambda t}$ for $t>0$ by
  definition of sectional hyperbolicity. Hence $\widetilde
  F=\wedge^2F$ is uniformly expanded by $\wedge^2 DX_t$.

  The reciprocal statement is straightforward. Indeed, let
  us assume that $T_\Gamma M$ admits a $DX_t$-invariant
  partial hyperbolic splitting $E\oplus F$ with $E$
  uniformly contracted, and $\wedge^2 T_\Gamma M$ admits a
  $\wedge^2 DX_t$-invariant and partial hyperbolic
  splitting $\widetilde E\oplus \widetilde F$ with
  $\widetilde F=\wedge^2 F$ and $\widetilde F$ uniformly
  expanded. Then clearly, given a basis $\{u,v\}$ of a
  two-dimensional subspace $G$ of $F$, we have that
  $\|\wedge^2DX_t\cdot(u\wedge v)\|$ grows exponentially,
  and this means that the area along $G$ is uniformly
  expanded. Hence $E\oplus F$ is a sectional hyperbolic
  splitting.

  This concludes the proof.
\end{proof}

\subsection{The three-dimensional case}
\label{sec:three-dimens-case}
\hfill

Here we prove Theorem~\ref{mthm:sec-exp-3d}.

Now $M$ is a $3$-manifold and $\Gamma$ is a compact
$X_t$-invariant subset having a singular-hyperbolic
splitting $T_\Gamma M=E_\Gamma\oplus F_\Gamma$.  By
Theorem~\ref{mthm:bivectparthyp} we have a
$\wedge^2DX_t$-invariant partial hyperbolic splitting
$\wedge^2 T_\Gamma M=\widetilde E\oplus \widetilde F$ with
$\dim\widetilde F=1$ and $\widetilde F$ uniformly
expanded. Following the proof of
Theorem~\ref{mthm:bivectparthyp}, if we write $e$ for a
unit vector in $E_x$ and $\{u,v\}$ an orthonormal base for
$F_x$, $x\in\Gamma$, then $\widetilde E_x$ is a
two-dimensional vector space spanned by $e\wedge u$ together
with $e\wedge v$.

From Theorem~\ref{mthm:Jseparated-parthyp} and the
existence of adapted metrics (see e.g.  \cite{Goum07}),
there exists a field $\J$ of quadratic forms so that
$X$ is $\J$-non-negative, $DX_t$ is strictly
$\J$-separated on a neighborhood $U$ of $\Gamma$,
$E_\Gamma$ is a negative subbundle, $F_\Gamma$ is a
positive subbundle and these subspaces are almost
orthogonal. In other words, there exists a function
$\delta:\Gamma\to\RR$ such that $\tilde\J_x-\delta(x)\J_x>0,
x\in\Gamma$ and we can locally write $\J(v)=\langle
J(v),v\rangle$ where $J=\diag\{-1,1,1\}$ with respect
to the basis $\{e,u,v\}$ and
$\langle\cdot,\cdot\rangle$ is the adapted inner
product; see \cite{arsal2012a}.

It is well-known that $A\wedge A=\det(A)\cdot (A^{-1})^*$
with respect to the adapted inner product which trivializes
$\J$, for any linear transformation $A:T_xM\to T_yM$. Hence
$\wedge^2DX_t(x)=\det (DX_t(x))\cdot (DX_{-t}\circ X_t)^*$
and a straightforward calculation shows that the
infinitesimal generator $D^2(x)$ of $\wedge^2DX_t$ equals
$\trace(DX(x))\cdot Id - DX(x)^*$.

Therefore, using the identification between $\wedge^2 T_xM$
and $T_xM$ through the adapted inner product and
Proposition~\ref{pr:J-separated-tildeJ}
\begin{align}
  \hat\J_x=\partial_t(-\J)(\wedge^2DX_t\cdot
  v)\mid_{t=0} &= \langle-(J\cdot D^2(x)+D^2(x)^*\cdot
  J)v,v\rangle\nonumber
  \\
  &= \langle [(\J\cdot
  DX(x)+DX(x)^*\cdot\J)-2\trace(DX(x))\J]v,v\rangle\nonumber
  \\
  &= (\tilde\J-2\trace(DX(x))\J)(v).\label{eq:square-generator}
\end{align}
To obtain strict $(-\J)$-separation of $\wedge^2DX_t$ we
search a function $\delta_2:\Gamma\to\RR$  so that
\begin{align*}
  (\tilde\J-2\trace(DX)\J) - \delta_2(-\J)>0
  \quad\text{or}\quad
  \tilde\J-(2\trace(DX)-\delta_2)\J>0.
\end{align*}
Hence it is enough to make $\delta_2=2\trace(DX)-\delta$.
This shows that in our setting $\wedge^2DX_t$ is
always strictly $(-\J)$-separated.

Finally, according to Theorem~\ref{pr:char-dom-split}, to
obtain the partial hyperbolic splitting of $\wedge^2DX_t$
which ensures singular-hyperbolicity, it is % necessary and
sufficient that either
$\widetilde\Delta_a^b(x)=\int_a^b\delta_2(X_s(x))\,ds$
satisfies item (1) of Proposition~\ref{pr:char-dom-split} or
$\hat\J_x$ is positive definite, for all
$x\in\Gamma$. This amounts precisely to the %necessary and
sufficient condition in the statement of
Theorem~\ref{mthm:sec-exp-3d} and we are done.

\subsection{Existence of adapted inner product for
  singular-hyperbolicity}
\label{sec:existence-adapted-in}

Now we show how to prove Theorem~\ref{mthm:adapted3d}
adapting the previous arguments.

From Theorem~\ref{mthm:bivectparthyp} we know that, for a
singular-hyperbolic attracting set $\Gamma$ for a
three-dimensional vector field with a splitting $E\oplus F$,
we have a partially hyperbolic splitting $\tilde E\oplus \tilde F$ for the
action of $\wedge^2DX_t$, where $F$ is uniformly expanded by
$\wedge^2DX_t$. Hence, from \cite[Theorem 1]{Goum07} , there
exists an adapted inner product $\langle\cdot,\cdot\rangle$
for this cocycle. Let $\|\cdot\|$ be the associated norm on
$T_\Gamma M$. Then there exists $\lambda>0$ such that
$\|(\wedge^2DX_t)\mid_{\tilde F}\|\ge e^{\lambda }$ for all $t>0$.

We know that $E,F$ are almost orthogonal with respect
to this inner product and we can choose a continuous
family of vectors $\{e_x\}$, a unit basis of $E_x$, and
$\{u_x,v_x\}$ an orthonormal basis of $F_x, x
\in\Gamma$. We define the linear operator
$J:T_xM\circlearrowleft$ in the basis $\{e_x,u_x,v_x\}$
such that its matrix is $\diag\{-1,1,1\}$. Now the
associated quadratic form $\J_x(w)=\langle
J(w),w\rangle$ is such that $\wedge^2DX_t$ is strictly
$(-\J)$-separated by construction; see \cite[Section
2.5]{arsal2012a}.

This means that there exists a continuous function
$\hat\delta:\Gamma\to\RR$ for which
$\hat\J-\hat\delta(-\J)>0$, where $\hat\J$
is given in (\ref{eq:square-generator}). That is, we have
$\tilde\J+(\hat\delta(x)-2\trace(DX(x)))\J>0$. Hence, if we
set $\delta(x)=\hat\delta(x)-2\trace(DX(x))$, then we
obtain strict $\J$-separation for $DX_t$ over $\Gamma$, as
guaranteed by Proposition~\ref{pr:J-separated-tildeJ}.

This ensures, in particular, that the norm
$|w|=\xi\sqrt{\J(w_E)^2+\J(w_F)^2}$ is adapted to the dominated splitting
$E\oplus F$ for the cocycle $DX_t$, where $w=w_E+w_F\in E_x\oplus
F_x, x\in\Gamma, \ \textrm{and} \ \xi$ is an
arbitrary positive constant; see \cite[Section
4.1]{arsal2012a}. This means that there exists $\mu>0$ such
that $|DX_t\mid_{E_x}|\cdot|DX_{-t}\mid_{F_{X_t(x)}}|\le e^{-\mu
  t}$ for all $t>0$.

Moreover, from the definition of the inner product and
the relation between $\wedge$ and the cross-product
$\times$, it follows that $|\det (DX_t\mid_{F_x})|=\|(\wedge^2DX_t)\cdot(u\wedge
v)\|=\|(\wedge^2DX_t)\mid_{F}\|\ge e^{\lambda t}$ for all
$t>0$, so $|\cdot|$ is adapted to the area expansion
along $F$.

To conclude, we are left to show that $E$ admits a
constant $\omega>0$ such that $|DX_t\mid_{E}|\le
e^{-\omega t}$ for all $t>0$. 

But since $E$ is
uniformly contracted, we know that $X(x)\in F_x$ for
all $x\in\Gamma$.

\begin{lemma}
  \label{le:flow-center}
  Let $\Gamma$ be a compact invariant set for a flow
  $X$ of a $C^1$ vector field $X$ on $M$.  Given a
  continuous splitting $T_\Gamma M = E\oplus F$ such
  that $E$ is uniformly contracted, then $X(x)\in F_x$
  for all $x\in \Lambda$.
\end{lemma}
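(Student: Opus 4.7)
The plan is to argue by contradiction using forward invariance of the splitting combined with the uniform contraction on $E$ read in reverse time. First I would decompose $X(x)=X_E(x)+X_F(x)$ with $X_E(x)\in E_x$ and $X_F(x)\in F_x$; this is well defined at every point of $\Gamma$ and the sections $x\mapsto X_E(x)$, $x\mapsto X_F(x)$ are continuous because the splitting is continuous and $X$ is $C^1$. Since $X_t$ is the flow of $X$, we have $DX_t(x)\cdot X(x)=X(X_t(x))$ for every $x\in\Gamma$ and $t\in\RR$; applying the $DX_t$-invariance of $E$ and $F$ and uniqueness of the decomposition, this yields the two separate identities
\begin{align*}
DX_t(x)\cdot X_E(x)=X_E(X_t(x)), \qquad DX_t(x)\cdot X_F(x)=X_F(X_t(x)).
\end{align*}

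Next, I would exploit uniform contraction of $E$ under positive time in order to get uniform expansion of $E$ under negative time. By hypothesis there are $K,\lambda>0$ with $\|DX_t\mid E_z\|\le K e^{-\lambda t}$ for every $z\in\Gamma$ and $t\ge0$. For any $v\in E_z$ this gives
\begin{align*}
\|v\|=\|DX_t\cdot (DX_t\mid E_z)^{-1} v\|\le K e^{-\lambda t}\|(DX_t\mid E_z)^{-1}v\|,
\end{align*}
hence $\|(DX_t\mid E_z)^{-1}v\|\ge K^{-1} e^{\lambda t}\|v\|$. Substituting $z=X_{-t}(x)\in\Gamma$ in the first identity above and inverting, I obtain
\begin{align*}
\|X_E(X_{-t}(x))\|\ge K^{-1} e^{\lambda t}\|X_E(x)\| \qquad \text{for all } t\ge0.
\end{align*}

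Finally, since $\Gamma$ is compact and $X_E$ is continuous, the section $X_E$ is bounded on $\Gamma$; as $X_{-t}(x)\in\Gamma$ for all $t\ge0$, the above inequality forces $X_E(x)=0$ unless the right-hand side were bounded in $t$, which happens only if $X_E(x)=0$. Hence $X(x)=X_F(x)\in F_x$, which is the desired conclusion. I do not anticipate a genuine obstacle here: the only subtlety is the passage from forward contraction to a uniform backward expansion estimate that is valid pointwise on the compact set $\Gamma$, and this is immediate from the operator identity $\mathrm{Id}=DX_t\circ DX_{-t}$ on the invariant subbundle $E$.
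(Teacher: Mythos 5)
The paper itself provides no argument for this lemma: the proof text is simply a citation to \cite[Lemma 3.3]{arsal2012a}, so there is nothing in this document to compare against directly. Your argument is correct and is the standard one for this kind of statement: decompose $X=X_E+X_F$, observe that each component is separately flow-equivariant because $E$ and $F$ are $DX_t$-invariant and the decomposition is unique, and then combine compactness and (two-sided) invariance of $\Gamma$ with the backward uniform expansion on $E$ forced by forward uniform contraction to conclude $X_E\equiv 0$. This uses both forward and backward invariance of $\Gamma$, which the hypothesis provides, and continuity of $X_E$ for the boundedness step, which holds since the splitting is continuous and $X$ is $C^1$. The only thing to tidy is a slight index slip in the sentence beginning ``For any $v\in E_z$'': the vector to which $(DX_t\mid E_z)^{-1}$ is applied lives in $E_{X_t(z)}$, not $E_z$, since $(DX_t\mid E_z)^{-1}\colon E_{X_t(z)}\to E_z$. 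This is harmless because the inequality you derive, $\|(DX_t\mid E_z)^{-1}v\|\ge K^{-1}e^{\lambda t}\|v\|$ for $v\in E_{X_t(z)}$, is precisely what you then apply with $z=X_{-t}(x)$ and $v=X_E(x)\in E_x$.
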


\begin{proof}
See \cite[Lemma 5.1]{ararbsal2013} and \cite[Lemma 3.3]{arsal2012a}.
\end{proof}

On the one hand, on each non-singular point $x$ of
$\Gamma$ we obtain for each $w\in E_x$
  \begin{align*}
    e^{-\mu t}\ge \frac{|DX_t\cdot w|}{|DX_t\cdot X(x)|}
    =
    \frac{|DX_t\cdot w|}{|X(X_t(x))|}
    \ge
    \frac{|DX_t\cdot v|}{\sup\{|X(z)|:z\in\Gamma\}}
    \ge
    |DX_t\cdot v|,
  \end{align*}
  since we can always choose a small enough constant $\xi>0$
  in such a way that $\sup\{|X(z)|:z\in\Gamma\}\le1$. We
  note that the choice of the positive constant $\xi$ does
  not change any of the previous relations involving
  $|\cdot|$.

  On the other hand, for $\sigma\in\Gamma$ such that
  $X(\sigma)=0$, we fix $t>0$ and, since $\Gamma$
  is a non-trivial invariant set, we can find a
  sequence $x_n\to\sigma$ of regular points of
  $\Gamma$. The continuity of the derivative cocycle
  ensures $|DX_t\mid_{E_\sigma}|=\lim_{n\to\infty}|DX_t\mid_{E_{x_n}}|\le
  e^{-\lambda t}$. Since $t>0$ was arbitrarily chosen,
  we see that $|\cdot|$ is adapted for the contraction
  along $E_\sigma$.

  This shows that $\omega=\mu$ and completes the proof of
  Theorem~\ref{mthm:adapted3d}.

\section{Examples of application}
\label{sec:comments-organiz-tex}

We present some examples showing that the statement of
Theorem~\ref{mthm:bivectparthyp} does not extend easily
to a higher dimensional setting.

\begin{example}\label{ex:no-form}
  In a higher dimensional setting, consider $\sigma$ a
  hyperbolic fixed point for a vector field $X$ in a
  $4$-manifold such that $DX(\sigma)$ is diagonal with
  eigenvalues $\lambda_0<\lambda_1<\lambda_2<0<\lambda_3$
  along the coordinate axis, satisfying
  $\lambda_1+\lambda_3>0$ (this is similar to the Lorenz
  singularity except for the extra contracting direction
  corresponding to $\lambda_0$). Consider also the quadratic
  form $\J(\vec x)= -x_0^2-x_1^2+x_2^2+x_3^2=\langle J\vec
  x,\vec x\rangle$ with $J=\diag\{-1,-1,1,1\}$ on $T_\sigma
  M$.  It is standard to define a bilinear form on
  $\wedge^2T_\sigma M$ using $\J$ by
  \begin{align}\label{eq:innerproductbivectors}
    (u_1\wedge u_2, v_1\wedge v_2)
    =
    \det
    \begin{pmatrix}
      \langle Ju_1, v_1\rangle & \langle Ju_1, v_2\rangle
      \\
      \langle Ju_2, v_1\rangle &\langle Ju_2, v_2\rangle
    \end{pmatrix}
  \end{align}
on simple bivectors and then extend by linearity to the
whole $\wedge^2T_\sigma M$.

However, letting $e_0,e_1,e_2,e_3$ be the canonical base,
$(e_i\wedge e_j,e_i\wedge e_j)=-1, i=0,1, j=2,3;$ but $(e_1\wedge
e_2,e_1\wedge e_2)=1=(e_2\wedge e_3,e_2\wedge e_3)$, and
$e_1\wedge e_2$ is contracted while $e_2\wedge e_3$ is
expanded by $\wedge^2DX_t=\wedge^2e^{tDX(\sigma)}$;
likewise $e_1\wedge e_2$ is contracted but $e_1\wedge e_3$
is expanded. Thus we have mixed behavior with both positive
and negative bivectors.

Hence, the standard way of building a quadratic form on
$\wedge^2 T_\sigma M$ \emph{from a quadratic form on
  $T_\sigma M$} does not capture the the partial hyperbolic
behavior on bivectors.
\end{example}

In the above example, the problem was caused by the
increased dimension of the negative $\J$-subspace, as the
following example shows.

\begin{example}
  \label{ex:Jcodim1}
  The case of codimension one: let us assume that $E\oplus
  F$ is a sectional-hyperbolic splitting over a compact
  invariant subset $\Gamma$ of a $C^1$ vector field $X$,
  where $E$ is one-dimensional and $F$ has arbitrary
  dimension. Then we have strict $\J$-separation for a
  certain family of quadratic forms which are given by
  $\J(u)=\langle J(u),u\rangle, u\in T_\Gamma M$ for a
  certain non-singular linear operator $J=J_x:T_x
  M\circlearrowleft$.

  We can now define an bilinear form on $\wedge^2 T_\Gamma
  M$ as in (\ref{eq:innerproductbivectors}) and check that,
  due to the one-dimensional character of $E$, the new form
  gives positive values to bivectors $u\wedge v$ where
  \emph{both $u,v$ belong to $F$}; and negative values to
  bivectors $u\wedge v$ where \emph{only $u$ belongs to
    $E$}. These two classes of bivectors split $\wedge^2
  T_\Gamma M=\widetilde E\oplus \widetilde F$ as in the
  statement of Theorem~\ref{mthm:bivectparthyp}.

  Hence, in this codimension one setting, we may use the
  standard construction of a bilinear form on the external
  square of a vector space to obtain a quadratic form which
  is suitable to study domination and partial hyperbolicity,
  directly from the originally given $\J$-separating
  quadratic form.
\end{example}

\begin{example}\label{ex:ThmA}
  Theorem~\ref{mthm:bivectparthyp} does not hold if we take
  $c < \dim F$: consider $\sigma$ a hyperbolic fixed point for
  a vector field $X$ in a $4$-manifold such that
  $DX(\sigma)=\diag\{-2, 1, 3, 10\}$. The splitting
  $E=\RR\times\{0^3\}, F=\{0\}\times\RR^3$ is dominated and
  hyperbolic but, for $c=2<3=\dim F$ the splitting $\tilde
  E\oplus \tilde F$ of the exterior square is not
  dominated. Indeed, the eigenvalues for $\tilde F$ are $1+3
  = 4, 1+10 = 11, 3+10 =13$, and for $\tilde E $ the
  eigenvalues are $-2+1 = -1, -2+3 = 1, -2+10 = 8$, so we
  have an eigenvalue $8$ in $\tilde E$ striclty bigger than
  the eigenvalue $4$ along $\tilde F$.
\end{example}

We now present applications of these results. First a very
simple but illustrative example.

\begin{example}\label{ex:Lorenz-like-J}
  Let us consider a hyperbolic saddle singularity $\sigma$
  at the origin for a smooth vector field $X$ on $\RR^3$
  such that the eigenvalues of $DX(\sigma)$ are real and
  satisfy $\lambda_2<\lambda_3<0<\lambda_1$. Through a
  coordinate change, we may assume that
  $D=DX(\sigma)=\diag\{\lambda_1,\lambda_2,\lambda_3\}$ and
  $DX_t(\sigma)=e^{t D}=\diag\{e^{\lambda_1 t},e^{\lambda_2
    t},e^{\lambda_3 t}\}$, $\wedge^2DX_t(\sigma)=
  % \diag\{e^{(\lambda_2+\lambda_3)t},e^{(\lambda_1+\lambda_3)t},e^{(\lambda_1+\lambda_2)t}\}
  % =
  e^{t  D^2}$ where $D^2=D^2(\sigma)=\trace(DX(\sigma))\cdot
  Id-DX(\sigma)^*=\diag\{\lambda_2+\lambda_3,
  \lambda_1+\lambda_3, \lambda_1+\lambda_2\}$.

  We take the quadratic form $\J(x,y,z)=x^2-y^2+z^2$ in
  $\RR^3$. Then $\J$ is represented by the matrix
  $J=\diag\{1,-1,1\}$, that is, $\J(w)=\langle
  J(w), w\rangle$ with the canonical inner
  product.

  Then $\tilde J=J\cdot D+D^*\cdot
  J=\diag\{2\lambda_1,-2\lambda_2,2\lambda_3\}$ and
  $\tilde J-\delta J>0\iff 2\lambda_2<\delta<2\lambda_3<0$. So
  $\delta$ is negative. From Proposition~\ref{pr:J-separated} and
  Proposition~\ref{pr:J-separated-tildeJ} we have strict
  $\J$-separation, thus partial hyperbolicity with the
  $y$-axis a uniformly contracted direction
  ($\J$-negative) dominated by the $yz$-direction
  ($\J$-positive).

  This is not a hyperbolic splitting and the conclusion
  would be the same if $\lambda_1$ where negative: we would
  get a sink with a partially hyperbolic splitting. Note
  also that $\Delta_0^t$ satisfies item (2) of
  Proposition~\ref{pr:char-dom-split}.

  As for $(-\J)$-separation of the exterior square, we have
  $\delta_2=2\trace(D)-\delta=
  2(\lambda_1+\lambda_2+\lambda_3)-\delta$ so
  $2(\lambda_1+\lambda_2)<\delta_2<2(\lambda_1+\lambda_3)$. Hence
  we can take $\delta_2>0$ if $\lambda_1+\lambda_3>0$ (note
  that $\lambda_1+\lambda_2>0$ implies this relation) and
  $\delta\gtrapprox2\lambda_2$. The condition
  $\lambda_1+\lambda_3>0$ is precisely the ``Lorenz-like''
  condition satisfied by the singularities of
  singular-hyperbolic attractors; see
  e.g. \cite{AraPac2010}.

  In this setting, we have both strict $\J$-separation and
  sectional-expansion along the $yz$-direction for $X_t$
  (and so singular-hyperbolicity), since $\delta_2>0$
  ensures that condition (1) in
  Theorem~\ref{mthm:sec-exp-3d} is true.
\end{example}

Now we show how our results simplify the checking of
singular-hyperbolicity in the standard example of a
geometric Lorenz flow.

\begin{example}
  \label{ex:geomLorenz}
  We now consider the geometric Lorenz example as
  constructed in \cite[Chapter 3, Section 3]{AraPac2010};
  see Figure~\ref{fig:Lorenz}.
  \begin{figure}[htpb]
    \centering
    \includegraphics[width=6cm]{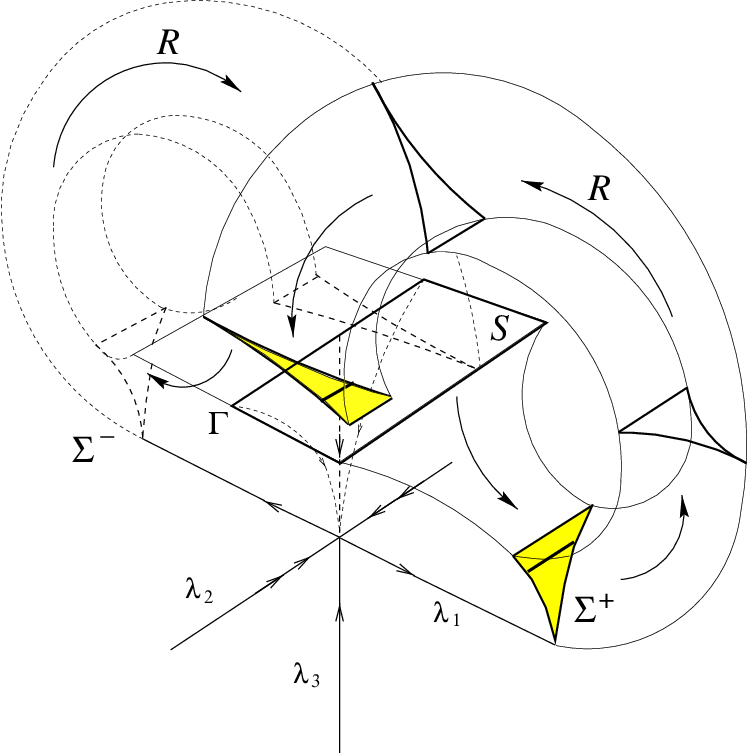}
    \caption{The geometric Lorenz flow.}
    \label{fig:Lorenz}
  \end{figure}
  In the linear region around the origin we have
  $X(\xi)=D\cdot\xi$ and so $DX(\xi)=D$ for all points
  $\xi$ around the origin, where $D$ was defined in
  Example~\ref{ex:Lorenz-like-J}. Hence we can
  calculate, as in Example~\ref{ex:Lorenz-like-J} with
  $\lambda_1+\lambda_3>0$, and show that we have strict
  $\J$ separation with
  $2\lambda_2<\delta<2\lambda_3<0$, that is, $\delta$
  is negative and bounded away from zero.

  In the same region we also get strict $(-\J)$-separation
  for the exterior square of the cocycle $DX_t$ with
  $\delta_2=2\trace(D)-\delta$ between
  $2(\lambda_1+\lambda_2)$ and $2(\lambda_1+\lambda_3)$.
  Thus, $\delta_2$ can be taken positive and bounded away
  from zero by setting $\delta\gtrapprox 2\lambda_2$.

  We are left to prove these properties in the lobes, where
  the flow is a combination of a rotation on the $xz$-plane,
  a dilation and a translation on the $y$-direction; and
  then check the singular-hyperbolic conditions on the
  transitional region between the lobes and the linear flow.

  We can write the vector field in the interior of the lobes
  as $X_i'=A_i\cdot(X_i-C_i)+P_i$, where $C_i$ is the center
  of the rotation, $P_i$ is a vector representing a
  translation and
  \begin{align*}
    A_i=
    \begin{pmatrix}
      \varrho\lambda_1 & 0 & -(-1)^i \\
      0 &  \zeta\lambda_2 & 0 \\
      (-1)^i &  0 & \varrho\lambda_1
    \end{pmatrix}
    \quad\text{with}\quad
    0<\varrho,\zeta\ll 1, i=1,2.
  \end{align*}
  Here $i=1$ corresponds to the lobe starting with $x>1$ and
  $i=2$ to the other lobe.  We observe that by an affine
  change of coordinates we can write the vector field as
  $Y'=A_i\cdot Y$.

  Using the same quadratic form $\J$ we get $\tilde
  J_i=J\cdot A_i+A_i^*\cdot J=
  \diag\{2\varrho\lambda_1,-2\zeta\lambda_2,2\varrho\lambda_1\}$
  , $i=1,2$.

  For the exterior square of derivative cocycle of the flow,
  we observe that $\delta_2$ becomes
  $2(2\varrho\lambda_1-\zeta\lambda_2)-\delta$ and
  $2\varrho\lambda_1-\zeta\lambda_2 >0$.

  Hence, there exists $\delta_1>0$ such that for all
  $\delta<\delta_1$ we have $\tilde \J_i-\delta\J>0$ and
  $\delta_2>0$ on the lobes. So, singular-hyperbolicity is
  also verified on the lobes.

  We still have to check the transition between the linear
  region and the lobes.

  We can find a smooth path $\tilde A$ from $D$ in the
  linear region to $A_i$ in the lobes made of symmetric
  matrices, ensuring that $\tilde J$ will remain diagonal:
  it is enough to take the line segment between $D$ and
  $A_i$ in $\RR^{3\times3}$ and in this way the signs of the
  first and second elements of the diagonal of the
  corresponding matrix $\tilde J$ do not change. However,
  the last element of the diagonal goes from $2\lambda_3<0$
  to $2\varrho\lambda_1>0$.

  Therefore, since the vector field $Z_i$ in the
  transitional region is defined as a linear combination
  $\mu X+(1-\mu)X_i$ of the fields in the linear region and
  the lobes, $i=1,2$ and $0\le\mu\le1$, we can write $\tilde
  J$ in the transitional region as
  \begin{align*}
    \diag\{
    2\lambda_1(\mu+\rho(1-\mu)), -2\lambda_2(\mu+\zeta(1-\mu)),
    2(\lambda_3\mu+\rho\lambda_1(1-\mu))\}.
  \end{align*}
  Hence $\tilde J-\delta J>0$ subject to the condition
  $2\lambda_2<\delta<2\lambda_3$, as in
  Example~\ref{ex:Lorenz-like-J}. This provides partial
  hyperbolicity with the negative $\J$-cone uniformly
  contracting on the geometric Lorenz attractor.

  Finally, $\delta_2=2\trace(DZ_i)-\delta=2\trace(D)\cdot\mu
  +2\trace(A_i)\cdot(1-\mu)-\delta$ and so, if we take
  $2\lambda_2\lessapprox\delta<2\lambda_3<0$, then we again
  obtain $\delta_2>0$, because the condition on $\delta$ is
  compatible with all the previous conditions on the linear
  region and on the lobes.

  In this way we have strictly negative $\delta$ and
  strictly positive $\delta_2$ on all points in a
  neighborhood of the geometric Lorenz attractor with
  respect to $\J$ and, from Theorem~\ref{mthm:sec-exp-3d},
  this alone ensures that the geometric Lorenz attractor is
  a singular-hyperbolic set.
\end{example}

%%%%%%%%%%%%%%%%%%%%%%%%%%%%%%%%%%%

\def\cprime{$'$}

% \bibliographystyle{abbrv}
% \bibliography{../bibliobase/bibliography}

\begin{thebibliography}{10}

\bibitem{Aleks68}
V.~M. Alekseev.
\newblock Quasirandom dynamical systems. {I}. {Q}uasirandom diffeomorphisms.
\newblock {\em Mat. Sb. (N.S.)}, 76 (118):72--134, 1968.

\bibitem{Aleks68a}
V.~M. Alekseev.
\newblock Quasirandom dynamical systems. {II}. {O}ne-dimensional nonlinear
  vibrations in a periodically perturbed field.
\newblock {\em Mat. Sb. (N.S.)}, 77 (119):545--601, 1968.

\bibitem{Aleks69}
V.~M. Alekseev.
\newblock Quasirandom dynamical systems. {III}. {Q}uasirandom vibrations of
  one-dimensional oscillators.
\newblock {\em Mat. Sb. (N.S.)}, 78 (120):3--50, 1969.

\bibitem{AraPac2010}
V.~Ara{\'u}jo and M.~J. Pacifico.
\newblock {\em Three-dimensional flows}, volume~53 of {\em Ergebnisse der
  Mathematik und ihrer Grenzgebiete. 3. Folge. A Series of Modern Surveys in
  Mathematics [Results in Mathematics and Related Areas. 3rd Series. A Series
  of Modern Surveys in Mathematics]}.
\newblock Springer, Heidelberg, 2010.
\newblock With a foreword by Marcelo Viana.

\bibitem{ararbsal2013}
V.~Araujo, A.~Arbieto and L.~Salgado.
\newblock Dominated splittings for flows with singularities.
\newblock {\em Nonlinearity} 26, 2391--2407. 2013.

\bibitem{arsal2012a}
V.~Araujo and L.~Salgado.
\newblock Infinitesimal lyapunov functions for singular flows.
\newblock {\em Mathematische Zeitschrift (online)}, pages 1--35, 2013.

\bibitem{arbieto2010}
A.~Arbieto.
\newblock Sectional lyapunov exponents.
\newblock {\em Proc. of the Amercian Mathematical Society}, 138:3171--3178,
  2010.

\bibitem{ArbSal2011}
A.~Arbieto, L.~Salgado, On critical orbits and sectional hyperbolicity of the nonwandering set for flows,
 \emph{Journal of differential equations} \textbf{250} (2011) 2927--2939.

\bibitem{arnold-l-1998}
L.~Arnold.
\newblock {\em {Random dynamical systems}}.
\newblock {Springer-Verlag}, {Berlin}, {1998}.

\bibitem{BDV2004}
C.~Bonatti, L.~J. D{\'i}az, and M.~Viana.
\newblock {\em {Dynamics beyond uniform hyperbolicity}}, volume {102} of {\em
  {Encyclopaedia of Mathematical Sciences}}.
\newblock {Springer-Verlag}, {Berlin}, {2005}.
\newblock {A global geometric and probabilistic perspective, Mathematical
  Physics, III}.

\bibitem{Bo75}
R.~Bowen.
\newblock {\em {Equilibrium states and the ergodic theory of Anosov
  diffeomorphisms}}, volume {470} of {\em {Lect. Notes in Math.}}
\newblock {Springer Verlag}, {1975}.

\bibitem{BR75}
R.~Bowen and D.~Ruelle.
\newblock {The ergodic theory of Axiom A flows}.
\newblock {\em {Invent. Math.}}, {29}:{181--202}, {1975}.

\bibitem{Goum07}
N.~Gourmelon.
\newblock Adapted metrics for dominated splittings.
\newblock {\em Ergodic Theory Dynam. Systems}, 27(6):1839--1849, 2007.

\bibitem{HuntMack03}
T.~J. Hunt and R.~S. MacKay.
\newblock Anosov parameter values for the triple linkage and a physical system
  with a uniformly chaotic attractor.
\newblock {\em Nonlinearity}, 16(4):1499, 2003.

\bibitem{BurnKatok94}
A.~Katok.
\newblock Infinitesimal {L}yapunov functions, invariant cone families and
  stochastic properties of smooth dynamical systems.
\newblock {\em Ergodic Theory Dynam. Systems}, 14(4):757--785, 1994.
\newblock With the collaboration of Keith Burns.

\bibitem{KH95}
A.~Katok and B.~Hasselblatt.
\newblock {\em {Introduction to the modern theory of dynamical systems}},
  volume~{54} of {\em {Encyclopeadia Appl. Math.}}
\newblock {Cambridge University Press}, {Cambridge}, {1995}.

\bibitem{lewow89}
J.~Lewowicz.
\newblock Expansive homeomorphisms of surfaces.
\newblock {\em Bol. Soc. Brasil. Mat. (N.S.)}, 20(1):113--133, 1989.

\bibitem{MeMor06}
R.~Metzger and C.~Morales.
\newblock Sectional-hyperbolic systems.
\newblock {\em Ergodic Theory and Dynamical System}, 28:1587--1597, 2008.

\bibitem{MPP99}
C.~A. Morales, M.~J. Pacifico, and E.~R. Pujals.
\newblock Singular hyperbolic systems.
\newblock {\em Proc. Amer. Math. Soc.}, 127(11):3393--3401, 1999.

\bibitem{MPP04}
C.~A. Morales, M.~J. Pacifico, and E.~R. Pujals.
\newblock {Robust transitive singular sets for 3-flows are partially hyperbolic
  attractors or repellers}.
\newblock {\em {Ann. of Math. (2)}}, {160}({2}):{375--432}, {2004}.

\bibitem{Newhouse2004}
S.~Newhouse.
\newblock Cone-fields, domination, and hyperbolicity.
\newblock In {\em Modern dynamical systems and applications}, pages 419--432.
  Cambridge Univ. Press, Cambridge, 2004.

\bibitem{robinson1999}
C.~Robinson.
\newblock {\em {Dynamical systems}}.
\newblock {Studies in Advanced Mathematics}. {CRC Press}, {Boca Raton, FL},
  {Second} edition, {1999}.
\newblock {Stability, symbolic dynamics, and chaos}.

\bibitem{robinson2004}
C.~Robinson.
\newblock {\em {An introduction to dynamical systems: continuous and
  discrete}}.
\newblock {Pearson Prentice Hall, Upper Saddle River, NJ}, {2004}.

\bibitem{luciana-tese}
L.~Salgado.
\newblock {\em Sobre hiperbolicidade fraca para fluxos singulares}.
\newblock PhD thesis, UFRJ, Rio de Janeiro, {2012}.

\bibitem{Sh87}
M.~Shub.
\newblock {\em {Global stability of dynamical systems}}.
\newblock {Springer Verlag}, {1987}.

\bibitem{Tu99}
W.~Tucker.
\newblock {The Lorenz attractor exists}.
\newblock {\em {C. R. Acad. Sci. Paris}}, {328, S{\'e}rie I}:{1197--1202},
  {1999}.

\bibitem{viana2000i}
M.~Viana.
\newblock {What{'}s new on Lorenz strange attractor}.
\newblock {\em {Mathematical Intelligencer}}, {22}({3}):{6--19}, {2000}.

\bibitem{Winitzki12}
S.~Winitzki.
\newblock {\em Linear Algebra: Via Exterior Products}.
\newblock Lulu.com, Raleigh, N.C., USA, 1.2 edition, 2012.

\bibitem{Wojtk85}
M.~Wojtkowski.
\newblock Invariant families of cones and {L}yapunov exponents.
\newblock {\em Ergodic Theory Dynam. Systems}, 5(1):145--161, 1985.
\end{thebibliography}

\end{document}